\numberwithin{equation}{section}
\newtheorem{theorem}{Theorem}[section]
\newtheorem{definition}[theorem]{Definition}
\newtheorem{pro}[theorem]{Proposition}
\newtheorem{corollary}[theorem]{Corollary}
\def\r2{\mathbb{R}^2}
\def\rd{\mathbb{R}^d}
\def\div{\mathrm{div}\,}
\def\id{\mbox{\boldmath$\mathfrak{i}$}}
\newcommand{\BBB}{\color{black}}
\newcommand{\R}{\mathbb{R}}
\newcommand{\FF}{\mathscr{F}}
\newcommand{\GG}{\mathscr{G}}
\newcommand{\tauV}{{\kern-3pt\tau}}
\newcommand{\la}{{\langle}}                  
\newcommand{\ra}{{\rangle}}
\newcommand{\eps}{\varepsilon}
\newcommand{\restr}[1]{\lower3pt\hbox{$|_{#1}$}}
\newcommand{\Pc}[2]{\overline{#1}\kern-2pt^{\vphantom 0}_{#2}}
\newcommand{\Pcb}[2]{\underline{\phantom{..}}\kern-6pt #1_{#2}}
\newcommand{\nchi}{{\raise.3ex\hbox{$\chi$}}}
\def\la{\langle}
\def\ra{\rangle}
\def\r2{\mathbb{R}^2}
\def\div{\mathrm{div}\,}
\def\dt0{{{\frac{d}{dt}}_{|t=0}}}
\def\({\left(}
\def\){\right)}
\def\id{\mathbf{1}}
\def\rd{\mathbb{R}^d}
\def\id{\mbox{\boldmath$\mathfrak{i}$}}
\def \eds#1#2#3 {#1, #2, #3.}
\def \no#1#2#3 {{\bf #1} (#2), #3.}
\def \nono#1#2#3#4 {{\bf#1} (#2), no.#3, #4.}
\newcommand{\mass}{\mathfrak{m}}
\newcommand{\MeasuresTwo}{\mathscr M_2(\rd;\mass)}
\newenvironment{proofad}{\removelastskip\par\medskip   
\noindent{\em Proof of Theorems \ref{main1} and \ref{main2}.}
\rm}{\penalty-20\null\hfill$\square$\par\medbreak} 
\newenvironment{proofad1}{\removelastskip\par\medskip   
\noindent{\em Proof of Proposition \ref{pro1}.}
\rm}{\penalty-20\null\hfill$\square$\par\medbreak} 
\newenvironment{proofad2}{\removelastskip\par\medskip   
\noindent{\em Proof of Proposition \ref{pro2}.}
\rm}{\penalty-20\null\hfill$\square$\par\medbreak} 
\title[Uniqueness for Keller-Segel models]{Uniqueness for Keller-Segel-type chemotaxis models}
\author{J. A. Carrillo, S. Lisini, E. Mainini}
\address{Jos\'e A. Carrillo, Department of Mathematics, Imperial College London,
South Kensington Campus, London SW7 2AZ, UK.}
\email{carrillo@imperial.ac.uk}
\address{Stefano Lisini,
Dipartimento di Matematica ``F. Casorati'', Universit\`a degli
Studi di Pavia, via Ferrata 1, 27100 Pavia, Italy.}
\email{stefano.lisini@unipv.it}
 \address{Edoardo Mainini,
Universit\`a degli Studi di Genova, Dipartimento di Ingegneria meccanica, energetica, gestionale e dei trasporti (DIME) - sezione MAT. P.le Kennedy 1, 16129 Genova, Italy}
\email{edoardo.mainini@unipv.it}
\date{}
\begin{document}

\begin{abstract}
We prove uniqueness in the class of integrable and bounded
nonnegative solutions in the energy sense to the Keller-Segel (KS)
chemotaxis system. Our proof works for the fully parabolic KS
model, it includes the classical parabolic-elliptic KS equation as
a particular case, and it can be generalized to nonlinear
diffusions in the particle density equation as long as the
diffusion satisfies the classical McCann displacement convexity
condition. The strategy uses Quasi-Lipschitz estimates for the
chemoattractant equation and the above-the-tangent
characterizations of displacement convexity. As a consequence, the
displacement convexity of the free energy functional associated to
the KS system is obtained from its evolution for bounded
integrable initial data.
\end{abstract}

\maketitle

\let\thefootnote\relax\footnote{AMS 2010 subject classification: 35A15}



\section{Introduction}

The classical Keller-Segel (KS) model for chemotaxis is the system
\[
\left\{\begin{array}{rl}
\partial_t n&=\kappa\Delta n-\chi\, \div(n\nabla c),\\
\partial_t c&=\eta\Delta c +\theta n-\gamma c.
\end{array}\right.
\]
Here, $n$ is the number/mass density of a bacteria/cell population
and $c$ represents  the concentration of a chemical attractant
that can suffer chemical degradation and that it is produced by
the cells themselves due to chemotactic interaction. The
parameters $\kappa,\chi,\eta, \theta,\gamma$ might be suitable
functions, assumed to be constant in this simplified model. We can
perform a time scaling and a suitable change of variables, that is
$\tau=\kappa t$,
$\rho(x,\tau)=\frac{\theta\chi}{\eta\kappa}n(x,\tau/\kappa),
v(x,\tau)=\frac\chi\kappa c(x,\tau/\kappa)$.
 The system is therefore reduced to
 \begin{equation}\label{ks}
\left\{\begin{array}{rl}
\partial_t \rho  &=\Delta \rho-\, \div(\rho\nabla v),\\[2mm]
\eps\partial_t v &=\Delta v + \rho-\alpha v,
\end{array}\right.
\end{equation}
where $\alpha\ge 0 $ and $\eps\ge 0$ are constants
($\alpha=\gamma/\eta,\: \eps=\kappa/\eta$). In case $\eps=0$, it
restricts to the classical parabolic-elliptic Patlak-KS model
\begin{equation}\label{pe}
\left\{\begin{array}{l}
\partial_t \rho=\Delta \rho-\, \div(\rho\nabla v),\\[2mm]
-\Delta v + \alpha v= \rho.
\end{array}\right.
\end{equation}
For $\eps>0$, the natural free energy functional associated to the
dynamics of the system \eqref{ks} is
\begin{equation}\label{fullfunctional}
\FF_{\eps,\alpha}(\rho,v):=\int_{\mathbb{R}^d}(\rho\log\rho-v\rho)\,
dx +\frac12\int_{\mathbb{R}^d}(|\nabla v|^2+\alpha v^2)\, dx\,.
\end{equation}
In the case $\eps=0$, corresponding to \eqref{pe}, this Liapunov
functional is at least formally equivalent to
\begin{equation}\label{functional2}
\FF_{0,\alpha}(\rho):=\int_{\mathbb{R}^d}(\rho\log\rho-\frac12
v\rho)\, dx\,
\end{equation}
 with the  convention that $v$ is obtained from the density $\rho$ by
$v=\mathcal{B}_{\alpha,d}\ast \rho$. Here,
$\mathcal{B}_{\alpha,d}$ denotes the Bessel kernel for $\alpha>0$
or the Newtonian kernel for $\alpha=0$, for any dimension $d$.
Therefore the role of the parameter $\eps$ is to discriminate
between parabolic-parabolic and parabolic-elliptic system. Note
that the Liapunov functionals \eqref{fullfunctional} and
\eqref{functional2} are just formally equivalent since the
$L^2$-integrability of $\nabla \mathcal{B}_{\alpha,d}\ast\rho$
fails if $d=1,2$ and $\alpha=0$. Thus, even if the classical free
energy writing and valid for all cases when $\eps=0$ is the one in
\eqref{functional2}, we will prefer to work with the functional as
in \eqref{fullfunctional} even if $\eps=0$, with a suitable
renormalization for the cases $d=1,2$ and $\alpha=0$ discussed in
Section 3.

Our main objective is the uniqueness of certain solutions, for
both systems \eqref{ks} and \eqref{pe}. Let us introduce the
notion of solution for the Cauchy problems associated to
\eqref{ks} and \eqref{pe} that we will consider in this work. We
denote by $\MeasuresTwo$ the set of nonnegative densities over
$\mathbb{R}^d$ with mass $\mass$ and finite second moment, i.e.,
$$
\MeasuresTwo:=\left\{\rho\in L^1(\rd): \rho\geq 0,
\int_{\rd}\rho(x)\,dx = \mass, \int_{\rd}|x|^2\rho(x)\,dx<+\infty
\right\}.
$$

\begin{definition}\label{def1}
We say that a weakly continuous map $\rho\in
C_w([0,T];\MeasuresTwo)$ is a bounded solution to the Cauchy
problem for \eqref{pe}, with initial datum
$\rho^0\in\MeasuresTwo\cap L^\infty(\mathbb{R}^d)$, if
\begin{itemize}
\item[\it i)] $\rho\in L^∞((0,T)\times \mathbb{R}^d)$ and
$|x|^2\rho_t(x)\in L^∞((0,T),L^1(\mathbb{R}^d))$,

\item[\it ii)] $\rho_0=\rho^0$ and the first equation of
\eqref{pe} holds in the sense of distributions on $(0,T)\times
\mathbb{R}^d$, where $v_t=\mathcal{B}_{\alpha,d}\ast \rho_t$ for
all $t\in [0,T]$,

\item[\it iii)] $\rho_t\in W^{1,1}(\mathbb{R}^d)$ for
$\mathcal{L}^1$-a.e. $t\in (0,T)$ and
\begin{equation}\label{dissipation}
\int_0^T\int_{\mathbb{R}^d}\frac{|\nabla\rho_t(x)|^2}{\rho_t(x)}\,dx\,dt
< +\infty.
\end{equation}
\end{itemize}
\end{definition}

\begin{definition}\label{def2}
We say that the couple of functions $ (\rho,v)$, with $\rho\in
C_w([0,T];\MeasuresTwo)$ and $v\in L^2((0,T);
W^{1,\,2}(\mathbb{R}^d))$, is a bounded solution to \eqref{ks}
with initial datum $(\rho^0,v^0)$, $\rho^0\in\MeasuresTwo\cap
L^\infty(\mathbb{R}^d)$ and $v^0\in W^{1,2}(\mathbb{R}^d)$, if
\begin{itemize}
\item[\it I)] $\rho\in L^∞((0,T)\times \mathbb{R}^d)$ and
$|x|^2\rho_t(x)\in L^∞((0,T),L^1(\mathbb{R}^d))$,

\item[\it II)] $\rho_0=\rho^0$,  the first
equation of \eqref{ks} holds in the sense of distributions on
$(0,T)\times\mathbb{R}^d$, and $v$ is the unique solution to the
Cauchy problem for the forced parabolic equation $\eps\partial_t
v-\Delta v+\alpha v=\rho$ over $(0,T)\times\mathbb{R}^d$ in the
standard sense, with initial datum $v^0$,

\item[\it III)] the property {\it iii)} of  {\rm Definition
\ref{def1}} holds.
\end{itemize}
\end{definition}

Let us emphasize that the main properties we need to get
uniqueness of solution are the boundedness of the densities and
the Fisher information \eqref{dissipation}. They together imply that the velocity
field of the continuity equation for the density $\rho$ is a well
defined object belonging to the right functional space, see
section 2 for details. Moreover, the boundedness of the density
implies that we have a uniform in bounded time intervals estimate
on the quasi-Lipschitz constant of part of the velocity field.
These are the basic properties that imply the uniqueness for
bounded solutions. Let us finally mention that part of the
strategy is related to the uniqueness of solutions to fluid and
aggregation equations developed in \cite{Y,L,AS,M1,CR,BLR,M2}. The
main novelty here is the interplay between the diffusive and the
aggregation parts. The main results of this work are:

\begin{theorem}\label{main1}
Let $T>0$ and let $\rho^0\in \MeasuresTwo\cap
L^\infty(\mathbb{R}^d)$. Let $\rho_1, \rho_2$ be two bounded
solutions on $[0,T]\times\mathbb{R}^d$ to the Cauchy problem
associated to \eqref{pe}, with initial datum $\rho^0$. Then
$\rho_1=\rho_2$.
\end{theorem}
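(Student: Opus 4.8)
The plan is to run a Wasserstein stability estimate of Yudovich/Loeper type, closed by an Osgood argument. Writing the first equation of \eqref{pe} as a continuity equation $\partial_t\rho_i+\div(\rho_i\,u_i)=0$ with velocity $u_i=-\nab\rho_i/\rho_i+\nab v_i$ and $v_i=\mathcal{B}_{\alpha,d}\ast\rho_i$, the hypotheses of Definition \ref{def1} (boundedness together with the Fisher information bound \eqref{dissipation} and $\nab v_i\in L^2$) guarantee that $u_i\in L^2(\rho_i\,dx\,dt)$ and that each $t\mapsto\rho_i(t)$ is an absolutely continuous curve in $(\MeasuresTwo,\ww)$, so that $D(t):=\www(\rho_1(t),\rho_2(t))$ is absolutely continuous and satisfies, for a.e. $t$, the derivative formula
\[
\tfrac12 D'(t)=\int_{\rd}\la x-\tt_t(x),\,u_1(x)\ra\,d\rho_1(t)+\int_{\rd}\la y-\tt_t^{-1}(y),\,u_2(y)\ra\,d\rho_2(t),
\]
where $\tt_t$ is the optimal transport map from $\rho_1(t)$ to $\rho_2(t)$ (existing and a.e. invertible since both densities are absolutely continuous). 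First I would set up this measurability and absolute-continuity framework rigorously, which is the content of the preliminary Section 2.

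Next I would split $u_i$ into the diffusive part $-\nab\rho_i/\rho_i$ and the drift part $\nab v_i$, and treat them separately. For the diffusive part the key point is that the Boltzmann entropy $\rho\mapsto\int_{\rd}\rho\log\rho$ is displacement convex (McCann's criterion). Its above-the-tangent inequality, written for the pair $(\rho_1(t),\rho_2(t))$ and for the reverse pair and then added, yields
\[
\int_{\rd}\la\tt_t(x)-x,\,\nab\rho_1\ra\,dx+\int_{\rd}\la\tt_t^{-1}(y)-y,\,\nab\rho_2\ra\,dy\le0,
\]
which is precisely the (nonpositive) diffusive contribution to $\tfrac12 D'(t)$, so it can simply be discarded. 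This is the step where displacement convexity enters, and it bypasses any attempt to differentiate the entropy along the solutions directly.

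It remains to bound the drift contribution. After the change of variables $y=\tt_t(x)$ in the second integral, using $\rho_2=(\tt_t)_\#\rho_1$, it collapses to $\int_{\rd}\la x-\tt_t(x),\,\nab v_1(x)-\nab v_2(\tt_t(x))\ra\,d\rho_1$, and I would split $\nab v_1(x)-\nab v_2(\tt_t(x))=[\nab v_1(x)-\nab v_1(\tt_t(x))]+[\nab v_1-\nab v_2](\tt_t(x))$. For the first bracket I use the quasi-Lipschitz (log-Lipschitz) estimate for $\nab\mathcal{B}_{\alpha,d}\ast\rho_1$, valid because $\rho_1\in L^\infty$, giving $|\nab v_1(x)-\nab v_1(\tt_t(x))|\le C\|\rho_1\|_\infty\,|x-\tt_t(x)|\,(1+|\log|x-\tt_t(x)||)$; integrating against $|x-\tt_t(x)|\,d\rho_1$ and applying Jensen's inequality with the concave modulus $s\mapsto s(1+|\log s|)$ bounds this term by $C\,D(t)\,(1+|\log D(t)|)$. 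For the second bracket I invoke a Loeper-type $L^2$ stability estimate $\|\nab\mathcal{B}_{\alpha,d}\ast(\rho_1-\rho_2)\|_{L^2}\le C\max(\|\rho_1\|_\infty,\|\rho_2\|_\infty)^{1/2}\,\ww(\rho_1,\rho_2)$, which together with Cauchy--Schwarz and $\rho_2=(\tt_t)_\#\rho_1$ controls it by $C\,D(t)$.

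Combining the two brackets, $D$ obeys $D'(t)\le C\,D(t)\,(1+|\log D(t)|)$ with $D(0)=0$. Since $\int_0 ds/[s(1+|\log s|)]=+\infty$, Osgood's lemma forces $D\equiv0$ on $[0,T]$, i.e. $\rho_1=\rho_2$. I expect the genuine difficulty to sit in the drift estimate: proving the quasi-Lipschitz bound for the Bessel/Newtonian kernel uniformly across the relevant cases (including the renormalized $d=1,2$, $\alpha=0$ situations), and combining the log-modulus with the Loeper estimate so that all constants depend only on the time-uniform $L^\infty$ norms of the densities. The rigorous justification of the derivative formula for $\www$ under these weak hypotheses is the other point that requires care.
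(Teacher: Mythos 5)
Your proposal is correct, and it reaches uniqueness by a genuinely different route than the paper. The paper never differentiates $W_2^2(\rho_1(t),\rho_2(t))$ along both solutions at once: instead it proves an evolution variational inequality (EVI) for a single bounded solution against an \emph{arbitrary fixed} reference datum $\bar z$, in which the free energy $\FF_{0,\alpha}$ appears explicitly (this is where the renormalization \eqref{renorm} for $d=1,2$, $\alpha=0$ and the $H^1_\alpha$ bookkeeping of Steps 2--3 are needed), and then invokes the abstract EVI doubling lemma \cite[Lemma 4.3.4]{AGS}, in which the energy terms cancel and only the modulus $\omega$ survives; uniqueness follows from the resulting expansion control via the function $G$. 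Your argument is the direct symmetric doubling in the style of Loeper \cite{L} and Carrillo--Rosado \cite{CR}: the free energy never appears, the entropy contribution is killed by adding the two above-the-tangent inequalities (exactly the paper's \eqref{wassersteindiff}, used in both directions), and the drift is handled by the same two analytic ingredients the paper uses, the log-Lipschitz estimate of Proposition \ref{pro1} and Loeper's estimate \eqref{h-1}, closed by Osgood, which is equivalent to the paper's $G^{-1}(G(0)+4Ct)=0$ step. What your route buys: it is more elementary, and it sidesteps the renormalization issue entirely, since only the difference $\nabla(v_1-v_2)$ enters and this is in $L^2$ for zero-mean $\rho_1-\rho_2$ in every dimension. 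What it gives up: the paper's Theorem \ref{th:GFEVI} is strictly stronger output --- the EVI against arbitrary references, the energy dissipation equality, and the $\omega$-convexity of the functional (Theorem \ref{th:convexity}) cannot be extracted from your symmetrized computation, and the paper's Theorem \ref{main2} (parabolic-parabolic case) needs the reference-point formulation to handle the $v$-equation. One technical point deserves the care you flag: your two-curve derivative formula for $W_2^2$ is a stronger statement than the fixed-reference formula \eqref{timederivative} that the paper uses (the paper deliberately avoids simultaneous differentiation by passing through the one-sided derivatives in \eqref{contr1}); the two-curve version is standard for absolutely continuous curves (\cite[\S 8.4]{AGS}) and both densities here are absolutely continuous with $L^2(\rho_i)$ velocity fields, so the invertibility of $\mathcal{T}_t$ and the formula are justified, but this is precisely the kind of step that must be written out rather than asserted.
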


\begin{theorem}\label{main2}
Let $T>0$ and let $\rho^0\in \MeasuresTwo\cap
L^\infty(\mathbb{R}^d)$, $v^0\in W^{1,2}(\mathbb{R}^d) \cap
W^{2,\infty}(\mathbb{R}^d)$. Let $(\rho_1, v_1)$ and
$(\rho_2,v_2)$ be two bounded solutions on
$[0,T]\times\mathbb{R}^d$ of the Cauchy problem associated to
\eqref{ks}, with initial datum $(\rho^0,v^0)$. Then $(\rho_1,
v_1)=(\rho_2,v_2)$.
\end{theorem}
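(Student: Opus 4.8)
The plan is to run a Gronwall--Osgood argument for the squared Wasserstein distance $W_2^2(\rho_1(t),\rho_2(t))$, in exact parallel with the proof of Theorem \ref{main1}, the only genuinely new ingredient being the control of the chemoattractant difference $w:=v_1-v_2$ through a parabolic energy estimate. First I would record that the density equation in \eqref{ks} is the continuity equation $\partial_t\rho_i+\div(\rho_i\,\xi_i)=0$ driven by the gradient velocity field $\xi_i=-\nab\log\rho_i+\nab v_i$; by the boundedness of the densities and the Fisher information bound \eqref{dissipation} (property {\it III}\,)) the curves $t\mapsto\rho_i(t)$ are absolutely continuous in $(\MeasuresTwo,W_2)$ with $\xi_i$ as their tangent velocities, so that $t\mapsto W_2^2(\rho_1(t),\rho_2(t))$ is absolutely continuous and a.e.\ differentiable. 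Denoting by $T_t$ the Brenier optimal map from $\rho_1(t)$ to $\rho_2(t)$ --- which exists since $\rho_1(t)\ll\Leb d$ --- the standard differentiation formula for $W_2^2$ along two absolutely continuous curves yields, for a.e.\ $t$,
\[
\frac12\frac{d}{dt}W_2^2(\rho_1,\rho_2)=\int_{\rd}\langle x-T_t(x),\,\xi_1(x)-\xi_2(T_t(x))\rangle\,\rho_1(x)\,dx .
\]

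Next I would split $\xi_1(x)-\xi_2(T_t(x))$ into its diffusive and drift parts. The diffusive part contributes
\[
\int_{\rd}\langle x-T_t(x),\,-\nab\log\rho_1(x)+\nab\log\rho_2(T_t(x))\rangle\,\rho_1\,dx\le 0,
\]
which is precisely the above-the-tangent inequality for the Boltzmann entropy $\int\rho\log\rho$: this functional is displacement convex (McCann's condition), one has $\nab\log\rho_i\in L^2(\rho_i)$ by \eqref{dissipation}, and summing the convexity inequalities written from $\rho_1$ to $\rho_2$ and back produces the nonpositive sign; hence this term is discarded. For the drift part $\int\langle x-T_t(x),\,\nab v_1(x)-\nab v_2(T_t(x))\rangle\rho_1\,dx$ I would insert $\mp\nab v_2(x)$ and estimate the two pieces separately. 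The piece $\int\langle x-T_t(x),\,\nab v_2(x)-\nab v_2(T_t(x))\rangle\rho_1\,dx$ is handled by the quasi-Lipschitz bound on $\nab v_2$ (uniform on $[0,T]$ since $\rho_2$ is bounded and $v^0\in W^{2,\infty}$), namely $|\nab v_2(x)-\nab v_2(y)|\le C|x-y|(1+|\log|x-y||)$, which after Jensen's inequality with the concave Osgood modulus $\omega(r)=r(1+|\log r|)$ gives a bound of the form $C\,\omega\big(W_2^2(\rho_1,\rho_2)\big)$. The remaining piece is controlled by Cauchy--Schwarz and the $L^\infty$ bound on $\rho_1$,
\[
\Big|\int_{\rd}\langle x-T_t(x),\nab w\rangle\rho_1\,dx\Big|\le W_2(\rho_1,\rho_2)\,\|\rho_1\|_\infty^{1/2}\,\|\nab w(t)\|_{L^2}\le \tfrac12 W_2^2(\rho_1,\rho_2)+\tfrac12\|\rho_1\|_\infty\|\nab w(t)\|_{L^2}^2 .
\]

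The heart of the matter, and the step I expect to be the main obstacle, is to close the estimate by controlling $\nab w$ --- this is where the two equations of \eqref{ks} genuinely couple, in contrast with the parabolic-elliptic case of Theorem \ref{main1} where $\nab(v_1-v_2)$ is controlled instantaneously through the Bessel/Newtonian kernel. Here $w=v_1-v_2$ solves $\eps\partial_t w-\Delta w+\alpha w=\rho_1-\rho_2$ with $w(0)=0$, and testing with $w$ gives
\[
\frac{\eps}{2}\frac{d}{dt}\|w\|_{L^2}^2+\|\nab w\|_{L^2}^2+\alpha\|w\|_{L^2}^2=\int_{\rd}(\rho_1-\rho_2)\,w\,dx\le\|\rho_1-\rho_2\|_{\dot H^{-1}}\,\|\nab w\|_{L^2}.
\]
The crucial comparison is $\|\rho_1-\rho_2\|_{\dot H^{-1}}\le M^{1/2}\,W_2(\rho_1,\rho_2)$ with $M=\max_i\|\rho_i\|_\infty$, obtained from the Benamou--Brenier representation along the displacement geodesic joining $\rho_1$ and $\rho_2$ together with the fact that geodesic interpolants of densities bounded by $M$ remain bounded by $M$. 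Absorbing $\|\nab w\|_{L^2}$ and integrating in time (recall $w(0)=0$) yields $\int_0^t\|\nab w(s)\|_{L^2}^2\,ds\le M\int_0^t W_2^2(\rho_1,\rho_2)\,ds$. I would then integrate the Wasserstein inequality in time as well: inserting this last bound, the pointwise quantity $\|\nab w(t)\|_{L^2}^2$ is replaced by $\int_0^t W_2^2\,ds$, and since $\omega(r)\ge r$ on the (bounded, by the second-moment control in property {\it I}\,)) range of $W_2^2$ the linear terms are absorbed into $\omega$, leaving
\[
W_2^2(\rho_1(t),\rho_2(t))\le C\int_0^t\omega\big(W_2^2(\rho_1(s),\rho_2(s))\big)\,ds .
\]
As $\omega$ is an Osgood modulus ($\int_0^1 dr/\omega(r)=+\infty$) and $W_2^2(\rho_1(0),\rho_2(0))=0$, the Osgood (Bihari) uniqueness lemma forces $W_2(\rho_1(t),\rho_2(t))\equiv0$, i.e.\ $\rho_1=\rho_2$ on $[0,T]$. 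Finally $\rho_1=\rho_2$ makes the two chemoattractant equations identical with identical initial datum $v^0$, so $v_1=v_2$ by uniqueness for the forced parabolic equation (property {\it II}\,)), which completes the proof.
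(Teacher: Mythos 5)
Your proof is correct, but it takes a genuinely more direct route than the paper's. The paper never estimates $W_2^2(\rho_1,\rho_2)$ between the two solutions directly; instead it proves the stronger Theorem \ref{th:GFEVI}: each bounded solution satisfies an evolution variational inequality for the free energy $\FF_{\eps,\alpha}$ in the product space $X_\eps=\MeasuresTwo\times L^2(\rd)$ with metric $D^2=W_2^2+\eps\|\cdot\|^2_{L^2}$, against an \emph{arbitrary} bounded reference point $\bar z$; uniqueness then follows from the expansion control \eqref{contractivity}, obtained by freezing one solution as reference point for the other and symmetrizing via the doubling lemma \cite[Lemma 4.3.4]{AGS}, together with $G(0)=-\infty$. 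Your argument collapses this scheme: by summing the two entropy above-the-tangent inequalities the diffusive terms cancel with the right sign (rather than being compared against $\FF_{\eps,\alpha}(\bar z)$), and the parabolic coupling is closed by the energy estimate for $w=v_1-v_2$ with $w(0)=0$ integrated in time, rather than by carrying $\frac{\eps}{2}\frac{d}{dt}\|v_t-\bar v\|^2_{L^2}$ inside the metric and the Dirichlet energy inside the functional. The technical toolbox is identical --- displacement convexity of the entropy, the quasi-Lipschitz bound of Proposition \ref{pro2}, Loeper's estimate \eqref{h-1}, and Osgood's lemma --- so your estimates all go through; in particular your time-integrated bound $\int_0^t\|\nabla w\|^2_{L^2}\,ds\le M\int_0^t W_2^2\,ds$ plays exactly the role that the absorbed term $\frac12\|\nabla(v_t-\bar v)\|^2_{L^2}$ plays in \eqref{ev}--\eqref{e}. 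What your leaner argument buys is a self-contained uniqueness proof that never needs the functional $\FF_{\eps,\alpha}$ (hence none of the integrability or renormalization discussion); what it gives up is everything beyond uniqueness: the EVI and EDE \eqref{EDE} gradient-flow structure, the quantitative stability \eqref{contractivity} for \emph{distinct} initial data (though your scheme would yield one by keeping $W_2^2(0)$ and $\frac{\eps}{2}\|w(0)\|^2_{L^2}$), and the $\omega$-convexity of Theorem \ref{th:convexity}, which the paper derives from the EVI. Two presentational points should be tightened: the two-curve differentiation formula for $W_2^2$ is safest stated as a one-sided inequality via the doubling lemma (the a.e.\ equality requires care with null sets depending on the frozen time), and your modulus $r\mapsto r(1+|\log r|)$ is not globally concave, so the Jensen step must be run with the concave majorant $\varphi$ of \eqref{varphi}, exactly as in \eqref{loglipschitz} and \eqref{parabolicloglipschitz}; both fixes are already available in the paper.
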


The proof of  uniqueness  as stated in Theorems \ref{main1} and
\ref{main2} will be a consequence of a more general property: we
will show that bounded solutions satisfies a strong gradient flow
formulation by means of a family of evolution variational
inequalities. This formulation is similar to the one for
semi-convex functionals and implies a non-expansivity property of
the distance between two solutions. This non-expansivity property
yields uniqueness. All these results will be stated in Theorem
\ref{th:GFEVI}. Moreover this formulation lead also to a relaxed
convexity property of the energy functional as stated in Theorem
\ref{th:convexity}.

There is a huge literature about the KS system and their
variations, so we just restrict here to discuss the main results
concerning about bounded solutions. In the classical
parabolic-elliptic KS equation $\eps=\alpha=0$ and $d=2$, global
in time bounded solutions in the subcritical case $\mass< 8\pi$
have been obtained joining the results in \cite{BDP,Kow05,CC}.
Actually, the global existence of weak solutions satisfying all
properties in Definition \ref{def1} except the $L^\infty$ bound
was obtained in \cite{BDP} while $L^\infty$-bounds in bounded time
intervals can be obtained from the results in \cite{Kow05,CC}. The
same techniques could eventually be used to get local in time
bounded solutions for all masses, although such a result is not
present in the literature. Let us also mention the recent preprint
\cite{CD} in which the authors actually show that the
$L^\infty$-norm of the solution decays in time like for the heat
equation in the subcritical case $\mass< 8\pi$ for more restricted
initial data. $L^\infty$-apriori estimates were obtained in the
classical parabolic-elliptic KS equation $\eps=0$ with $d\geq 2$
and $\alpha\geq 0$ for small $L^{d/2}$ initial data in
\cite{CPZ04,CPZ}. These results together with similar arguments as
in \cite{BDP} to get the free energy dissipation property and thus
the Fisher information bounds, could lead to the existence of
bounded solutions in these cases. We emphasize that these
$L^\infty$ estimates show that the solution in bounded time
intervals is bounded by a constant that depends only on the
$L^\infty$-norm of the initial data, the initial free energy, and
the final time. In particular, existence of bounded solutions is
expected if $\rho^0\in L^\infty(\mathbb{R}^d)$, and this explains
the presence of such an assumption in the previous definitions.

Concerning the fully parabolic KS system, we find global in time
solutions satisfying all properties stated in Definition
\ref{def2} except the $L^\infty$ bounds in \cite{CaCo} for $d=2$
and the subcritical mass case $\mass < 8\pi$. $L^\infty$-apriori
estimates were obtained in \cite{KS} for the fully parabolic case
but in bounded domains. It is reasonable to expect that this
strategy should work for the whole space case, although it is not
written as such in the literature. Results in higher dimensions
concerning solutions with $L^\infty$ estimates for small initial
data can be found in \cite{Bil99} but estimates on the free energy
dissipation are missing there. We finally refer to
\cite{BCL,BL,CC,KimYao} for different results concerning the
existence of solutions satisfying the boundedness of the Fisher
information and/or the uniform bounds of the solutions for
particular choices of $\eps\geq 0$, $\alpha\geq 0$, and nonlinear
diffusions.

As mentioned before, Theorems \ref{main1} and \ref{main2} are
based on the derivation of quasi-Lipschitz estimates for the
chemoattractant $v$. This is the reason behind the additional
assumption on the initial datum $v^0$ in Theorem \ref{main2}. We
will clarify the use of quasi-Lipschitz estimates of the
chemoattractant in the next section together with a quick summary
of the main properties of optimal transport that we need in this
work. Section \ref{sec:main} is devoted to show the main
uniqueness results, derived from a more general property of
bounded solutions for the Keller-Segel model. In fact, we will
show that for bounded solutions we can obtain evolution
variational inequalities. In Section \ref{sec:convexity} we show
that these evolution variational inequalities lead to certain
convexity of the associated free energy functional. In order not
to break the flow of the argument, we postpone to Section
\ref{sec:zygmund} the rigorous derivation of the quasi-Lipschitz
estimates of the elliptic and parabolic equations for $v$. In
Section \ref{sec:zygmund}, we will also prove a strengthening of
Theorem \ref{main2}, with more general initial data. Finally,
Section \ref{sec:nonlinear} is devoted to show how to adapt these
arguments to Keller-Segel models with nonlinear diffusion.


\section{Preliminary notions}\label{prel}

\subsection{Some elliptic and parabolic regularity estimates}

The proofs of our results are based on the technique used by
Yudovich \cite{Y} for treating uniqueness in the case of
incompressible Euler equations for fluidodynamics. In particular,
we exploit a quasi-Lipschitz property for the velocity field of
the continuity equation for $\rho$ in \eqref{ks} and \eqref{pe}.
This property comes from the regularity that $v$ gains being
solution to the second equation in \eqref{ks} and \eqref{pe}.

Suppose first that $v=\mathcal{B}_{0,d}\ast\rho$. If $\rho\in
L^1\cap L^\infty(\mathbb{R}^d)$, by exploiting some estimates of
the Newtonian potential, $\nabla v$ satisfies the following
log-Lipschitz property (see \cite{BLL} and \cite[Chapter 8]{MB},
\cite{SV} and also \cite{Y}),
\begin{equation*}
|\nabla v(x)-\nabla v(y)|\le C|x-y|(1-\log^-|x-y|),
\end{equation*}
where $C$ is a suitable positive constant, depending only on
$\|\rho\|_{L^1}$ and $\|\rho\|_{L^\infty}$ and  $\log^-$ denotes
the negative part of the natural logarithm function. As a
consequence, we get the estimate
\begin{equation}\label{loglipschitz}
|\nabla v(x)-\nabla v(y)|^2\le C^2\varphi(|x-y|^2)
\end{equation}
for some new positive constant $C$, where  $\varphi$ is the
concave function on $[0,\infty)$ defined as
\begin{equation}\label{varphi}
\varphi(x):=\left\{
\begin{array}{ll}
x\log^2 x\quad&\mbox{if $x\le e^{-1-\sqrt{2}}$},\\
x+2(1+\sqrt2)e^{-1-\sqrt2}\quad&\mbox{if $x> e^{-1-\sqrt{2}}$}.
\end{array}
\right.
\end{equation}
Indeed, for large values of $|x-y|$ the estimate
\eqref{loglipschitz} is quite obvious, since it is immediate to
show that $\nabla \mathcal{B}_{0,d}\ast \rho$ is a bounded
function in the whole space with a direct estimate using the fact
that $\rho\in L^1\cap L^\infty(\mathbb{R}^d)$. The log-Lipschitz
property itself can be justified through standard elliptic
regularity, as we will do in Section \ref{sec:zygmund}.

Analogous facts hold if we consider the equation $-\Delta v+\alpha
v=\rho$, appearing in \eqref{pe}, or more general uniformly
elliptic operators, so that we have

\begin{pro}\label{pro1}
Suppose that $\rho\in L^1\cap L^\infty({\mathbb{R}^d})$ and
$\alpha\ge 0$. Then $v=\mathcal{B}_{\alpha,d}\ast\rho$ satisfies
the estimate \eqref{loglipschitz}, where $C$ is a suitable
positive constant, depending only on $\alpha, d, \|\rho\|_{L^1(\mathbb{R}^d)}$,
and $\|\rho\|_{L^\infty(\mathbb{R}^d)}$.
\end{pro}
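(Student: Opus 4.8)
The plan is to prove the pointwise log-Lipschitz bound
\[
|\nabla v(x)-\nabla v(y)|\le C\,|x-y|\,(1-\log^-|x-y|)
\]
directly from the convolution representation $v=\mathcal{B}_{\alpha,d}\ast\rho$, and then to deduce \eqref{loglipschitz} from it exactly as indicated in the discussion preceding the statement. The only structural input I need about the kernel is that $\mathcal{B}_{\alpha,d}$ is smooth away from the origin, with the singularities $|\nabla\mathcal{B}_{\alpha,d}(w)|\lesssim|w|^{1-d}$ and $|\nabla^2\mathcal{B}_{\alpha,d}(w)|\lesssim|w|^{-d}$ as $w\to 0$, and with the correct behaviour at infinity: exponential decay when $\alpha>0$ (so the far part is trivially controlled by $\|\rho\|_{L^1(\mathbb{R}^d)}$) and the algebraic rates $|w|^{1-d}$, $|w|^{-d}$ when $\alpha=0$. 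These follow from the explicit form of the Bessel/Newtonian kernel; since adding the zeroth order term $\alpha v$ does not alter the leading singularity at the origin, the parameter $\alpha$ enters only through the constants and the decay at infinity.

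Fix $x,y$ and set $r=|x-y|$. First I would reduce to $r\le 1$: for $r\ge1$ the bound follows from the crude global estimate $\|\nabla v\|_{L^\infty}\le C(\|\rho\|_{L^1}+\|\rho\|_{L^\infty})$, obtained by splitting $\int|\nabla\mathcal{B}_{\alpha,d}(x-z)|\rho(z)\,dz$ into $\{|x-z|\le1\}$ (using $\|\rho\|_{L^\infty}$ and local integrability of $|w|^{1-d}$) and $\{|x-z|>1\}$ (using $\|\rho\|_{L^1}$). For $r\le1$ I write
\[
\nabla v(x)-\nabla v(y)=\int_{\mathbb{R}^d}\big(\nabla\mathcal{B}_{\alpha,d}(x-z)-\nabla\mathcal{B}_{\alpha,d}(y-z)\big)\rho(z)\,dz
\]
and split the domain at scale $r$. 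On the near region $\{|x-z|\le 2r\}$ I bound the two gradient terms separately, using $\|\rho\|_{L^\infty}$ and $\int_{|w|\le 2r}|w|^{1-d}\,dw\simeq r$, producing a contribution $\lesssim\|\rho\|_{L^\infty}\,r$. On the far region $\{|x-z|>2r\}$ the segment $[x,y]$ stays at distance $\ge\tfrac12|x-z|$ from $z$, so the mean value inequality gives $|\nabla\mathcal{B}_{\alpha,d}(x-z)-\nabla\mathcal{B}_{\alpha,d}(y-z)|\lesssim r\,|x-z|^{-d}$; splitting the resulting integral into the annulus $\{2r<|x-z|\le1\}$, which contributes $\lesssim\|\rho\|_{L^\infty}\,r\log(1/r)$, and $\{|x-z|>1\}$, which contributes $\lesssim\|\rho\|_{L^1}\,r$, yields the logarithmic term. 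Summing the three pieces gives the log-Lipschitz estimate with $C=C(\alpha,d,\|\rho\|_{L^1},\|\rho\|_{L^\infty})$.

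Finally I would convert to the form \eqref{loglipschitz}. Squaring and substituting $t=r^2$ turns $r^2(1-\log^- r)^2$ into $t\,(1-\tfrac12\log t)^2$ for $t<1$, and an elementary comparison shows this is $\le C'\,t\log^2 t$ on $(0,e^{-1-\sqrt2}]$, since the ratio $(1-\tfrac12\log t)^2/\log^2 t$ is continuous there with limit $1/4$ at $0$; on $t>e^{-1-\sqrt2}$ the bounded quantity $|\nabla v(x)-\nabla v(y)|^2$ is controlled by the positive, increasing linear branch of $\varphi$ in \eqref{varphi}. The junction value $e^{-1-\sqrt2}$ is precisely the point where the slope of $t\log^2 t$ equals $1$, which is what renders $\varphi$ concave, and after adjusting $C$ to absorb these universal constants one obtains \eqref{loglipschitz}. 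The main obstacle is the first step, namely securing the kernel bounds uniformly in the two cases $\alpha=0$ and $\alpha>0$ and in all dimensions — especially the borderline low dimensions $d=1,2$ for the Newtonian kernel, where $\nabla\mathcal{B}_{0,d}$ barely fails to be square integrable and, for $d=1$, $\nabla^2\mathcal{B}_{0,1}$ is a Dirac mass so that the far region contributes nothing and the estimate improves to genuine Lipschitz. For general uniformly elliptic second order operators the explicit kernel is unavailable, and the cleaner route, deferred to Section \ref{sec:zygmund}, replaces these computations by Calder\'on--Zygmund/Zygmund-space regularity for the equation solved by $v$.
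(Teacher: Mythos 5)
Your proof is correct, but it follows a genuinely different route from the paper's. The paper's own proof (given in Section \ref{sec:zygmund}) is abstract: for $\alpha>0$ it invokes the theory of Bessel potentials from \cite[Chapter 5]{S}, by which convolution with $\mathcal{B}_{\alpha,d}$ gains two indices of regularity on the Zygmund scale, so $\rho\in L^\infty(\mathbb{R}^d)=\Lambda_0(\mathbb{R}^d)$ yields $v\in\Lambda_2(\mathbb{R}^d)$, hence $\nabla v\in\Lambda_1(\mathbb{R}^d)$, and \eqref{loglipschitz} then follows from the standard log-Lipschitz modulus of continuity of Zygmund functions; for $\alpha=0$ the paper simply defers to the references of Section \ref{prel} (\cite{BLL,MB,SV,Y}), observing that $\nabla v$ is globally bounded and that the Newtonian kernel has the same singularity at the origin as the Bessel one. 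You instead carry out the classical direct potential estimate that those references contain (essentially the Yudovich/Majda--Bertozzi argument): splitting the convolution at scale $r=|x-y|$, using local integrability of $|w|^{1-d}$ on the near region, the mean-value bound $r|x-z|^{-d}$ on the far region --- the annulus $2r<|x-z|\le 1$ producing the logarithm and the region $|x-z|>1$ controlled by $\|\rho\|_{L^1(\mathbb{R}^d)}$ --- plus a crude global bound on $\|\nabla v\|_{L^\infty(\mathbb{R}^d)}$ to dispose of $r\ge 1$. Your conversion of the log-Lipschitz inequality into the precise form \eqref{loglipschitz}, including the verification that the junction value $e^{-1-\sqrt 2}$ is where the slope of $t\log^2 t$ equals $1$ so that $\varphi$ in \eqref{varphi} is concave and $C^1$, is also correct, and it makes explicit a step the paper leaves implicit. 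As for what each approach buys: yours is elementary and self-contained, works uniformly in $\alpha\ge 0$ and $d\ge 1$ (you correctly single out the borderline cases $d=1,2$), and tracks the dependence of $C$ on $\alpha$, $d$, $\|\rho\|_{L^1(\mathbb{R}^d)}$ and $\|\rho\|_{L^\infty(\mathbb{R}^d)}$ claimed in the statement; the paper's is shorter, meshes with the Zygmund-space machinery it needs anyway for the parabolic estimate of Proposition \ref{pro2} and for Theorem \ref{refined}, and extends to uniformly elliptic operators without explicit kernels --- a limitation of the direct computation that you yourself flag at the end.
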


About the parabolic equation for $v$ in \eqref{ks}, the
quasi-Lipschitz property also carries over, since formally
inequality \eqref{loglipschitz} translates in terms of the
parabolic metric to
\begin{equation}\label{parabolicloglipschitz}
|\nabla v(t,x)-\nabla v(s,y)|^2\le C^2\varphi((|x-y|+|s-t|^{1/2})^2)\quad\forall x,y\in\mathbb{R}^d, \; s,t\in[0,T].
\end{equation}

\begin{pro}\label{pro2}
Suppose that $\rho\in L^\infty((0,T)\times{\mathbb{R}^d})$, $v^0
\in W^{2,\infty}(\mathbb{R}^d)$ and $\alpha\ge 0$. If $v$ is the
unique solution to the Cauchy problem for the parabolic equation
$\partial_t v=\Delta v-\alpha v+\rho$ (in the standard sense of
convolution with fundamental solution), then $v$ satisfies
\eqref{parabolicloglipschitz}, where $C$ is a suitable positive
constant, depending only on $\alpha, d,
\|v^0\|_{W^{2,\infty}(\mathbb{R}^d)}$, and
$\|\rho\|_{L^\infty((0,T)\times{\mathbb{R}^d})}$.
\end{pro}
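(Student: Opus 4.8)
The plan is to start from the Duhamel representation of $v$, split $\nabla v$ into a contribution of the initial datum and a contribution of the source $\rho$, and show that the first is parabolically Lipschitz while the second carries the logarithmic loss. Writing $G_\sigma(z)=(4\pi\sigma)^{-d/2}e^{-|z|^2/4\sigma}$ for the heat kernel and substituting $w=e^{\alpha t}v$ (which turns the equation into the pure heat equation with source $e^{\alpha\sigma}\rho$), the unique solution is
\[
v(t,x)=e^{-\alpha t}(G_t*v^0)(x)+\int_0^t e^{-\alpha(t-\sigma)}\big(G_{t-\sigma}*\rho(\sigma,\cdot)\big)(x)\,d\sigma .
\]
Differentiating, and moving the spatial derivative onto $v^0$ in the first term and onto the kernel in the second, gives $\nabla v=I+F$ with $I(t,x)=e^{-\alpha t}(G_t*\nabla v^0)(x)$ and $F(t,x)=\int_0^t e^{-\alpha(t-\sigma)}\big(\nabla G_{t-\sigma}*\rho(\sigma,\cdot)\big)(x)\,d\sigma$. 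Throughout I abbreviate the parabolic distance between $P=(t,x)$ and $Q=(s,y)$ by $r:=|x-y|+|t-s|^{1/2}$, and I use repeatedly that $\varphi(x)\ge x$ for every $x\ge 0$, so that any \emph{parabolic-Lipschitz} bound $|\cdot|\le Cr$ automatically yields $|\cdot|^2\le C^2\varphi(r^2)$.

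First I would dispose of the initial term $I$. Since $v^0\in W^{2,\infty}$, the field $\nabla v^0$ is bounded and globally $\|\nabla^2v^0\|_\infty$-Lipschitz; as the heat semigroup does not increase the Lipschitz constant, $|I(t,x)-I(t,y)|\le\|\nabla^2v^0\|_\infty|x-y|$. For the time increment I write $I(t,y)-I(s,y)=\int_s^t(\Delta-\alpha)I(\sigma,y)\,d\sigma$ and estimate $|\Delta(G_\sigma*\nabla v^0)|\le\|\nabla G_\sigma\|_{L^1}\|\nabla^2v^0\|_\infty\le C\sigma^{-1/2}\|\nabla^2v^0\|_\infty$ together with $|I|\le\|\nabla v^0\|_\infty$; integrating the $\sigma^{-1/2}$ singularity produces $\le C\|v^0\|_{W^{2,\infty}}|t-s|^{1/2}$. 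Hence $I$ is parabolic-Lipschitz with constant controlled by $\|v^0\|_{W^{2,\infty}}$ and $\alpha$, and by the remark above it is dominated by $C\sqrt{\varphi(r^2)}$.

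The heart of the proof is the source term $F$, for which I would use a near/far decomposition at the parabolic scale $r$. I regard $F$ as the space--time convolution of $\rho$ (extended by $0$ for negative times) with the kernel $K(\sigma,z)=e^{-\alpha\sigma}\nabla G_\sigma(z)$, which satisfies $|K(\sigma,z)|\lesssim\sigma^{-(d+1)/2}e^{-c|z|^2/\sigma}$ and $|\nabla_z K|+\sigma^{1/2}|\partial_\sigma K|\lesssim\sigma^{-(d+2)/2}e^{-c|z|^2/\sigma}$. Splitting the variable $w=(\sigma,z)$ according to whether its parabolic distance to $P$ (or to $Q$) is $\le 2r$ or $>2r$, on the near region I bound each of $K(P-w)$ and $K(Q-w)$ separately via $\|K(\sigma,\cdot)\|_{L^1}=c\,e^{-\alpha\sigma}\sigma^{-1/2}$ and $\int_0^{Cr^2}\sigma^{-1/2}\,d\sigma\lesssim r$, obtaining a near contribution $\le C\|\rho\|_\infty r$. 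On the far region I use the mean value inequality $|K(P-w)-K(Q-w)|\le C\,r\,\ell(w)^{-(d+2)}$, with $\ell(w)$ the parabolic distance from $w$ to $P$, together with the fact that a dyadic parabolic annulus $\{2^jr\le\ell\le 2^{j+1}r\}$ has measure $\sim(2^jr)^{d+2}$; this gives $O(\|\rho\|_\infty r)$ per annulus, and summing over the $\sim\log(R/r)$ relevant scales yields a far contribution $\le C\|\rho\|_\infty r(1+\log^- r)$. Adding near and far and squaring gives, for small $r$, $|F(P)-F(Q)|^2\le C^2r^2(1+\log^- r)^2\le C^2\varphi(r^2)$, the small-scale branch of $\varphi$; for $r$ bounded away from $0$ one instead uses the uniform bound on $\nabla v$ (finite since $\int_0^t\|\nabla G_{t-\sigma}\|_{L^1}\,d\sigma<\infty$) and the linear growth of $\varphi$ on $[e^{-1-\sqrt2},\infty)$.

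I expect the main obstacle to be the far-field step for $F$: turning the Gaussian-weighted derivative bounds on $K$ into the clean parabolic mean value inequality $|K(P-w)-K(Q-w)|\lesssim r\,\ell(w)^{-(d+2)}$ uniformly on the far region, and controlling the outer cutoff $R$ so that the logarithm appears with the correct constant — this is where $\alpha$ enters (the weight $e^{-\alpha\sigma}$ fixing $R\sim\alpha^{-1/2}$ when $\alpha>0$, and the time horizon $\sigma\le T$ together with the Gaussian tail playing that role when $\alpha=0$). A secondary technical point is the bookkeeping needed to assemble the separate spatial and temporal increments into the single parabolic-metric inequality \eqref{parabolicloglipschitz}; this becomes transparent once one notes that, since $\int_0^\infty e^{-\alpha\sigma}\nabla G_\sigma\,d\sigma=\nabla\mathcal{B}_{\alpha,d}$, the spatial part of $F$ is a truncation of the elliptic convolution of Proposition \ref{pro1}, which is exactly why the same concave function $\varphi$ from \eqref{loglipschitz}--\eqref{varphi} governs the parabolic estimate.
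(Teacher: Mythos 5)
Your proposal is correct, but it takes a genuinely different route from the paper's. The paper proves Proposition \ref{pro2} essentially by citation: it invokes the parabolic Zygmund regularity theory of \cite{C} (bounded forcing, null initial datum) and \cite[Theorem 4]{K} (general Cauchy data) to conclude that $\rho\in L^\infty$ and $v^0\in\Lambda_2(\mathbb{R}^d)\supset W^{2,\infty}(\mathbb{R}^d)$ imply that $v$ lies in the parabolic Zygmund class $\Lambda_2$ on $[0,T]\times\mathbb{R}^d$, and then uses the standard embedding of Zygmund functions into the log-Lipschitz class to get \eqref{parabolicloglipschitz}; the case $\alpha>0$ is reduced to the heat equation through the exponential factor in the fundamental solution, exactly as in your substitution $w=e^{\alpha t}v$. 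You instead unfold what those citations contain: Duhamel, the splitting $\nabla v=I+F$, the parabolic Lipschitz bound for the initial layer, and a near/far dyadic decomposition of the space--time kernel producing the logarithmic loss. Your route buys self-containedness, explicit constants, and a structural explanation (via $\int_0^\infty e^{-\alpha\sigma}\nabla G_\sigma\,d\sigma=\nabla\mathcal{B}_{\alpha,d}$) of why the same $\varphi$ of \eqref{varphi} governs both \eqref{loglipschitz} and \eqref{parabolicloglipschitz}. Its cost is precisely the two points you flag, and both do work out: the mean value bound $|K(P-w)-K(Q-w)|\lesssim r\,\ell(w)^{-(d+2)}$ survives the discontinuity of the zero-extended kernel across $\{\sigma=0\}$ because in the crossing regime one has $0<t-\sigma_w\le|t-s|\le r^2$ while $|x-z_w|\gtrsim\ell(w)$, so the Gaussian factor already makes $|K(P-w)|\lesssim r\,\ell(w)^{-(d+2)}$ on its own; and past the outer scale $R\sim\alpha^{-1/2}$ (resp.\ $R\sim\sqrt T$ when $\alpha=0$) you must keep using the mean value theorem with the integrable Gaussian bounds on $\nabla_z K$ and $\partial_\sigma K$, rather than bounding the two kernels separately, since the latter would give an $O(1)$ contribution instead of $O(r)$ and destroy the estimate as $r\to0$. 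Two remarks: your constant (like the paper's, in truth) also depends on $T$ when $\alpha=0$, which is harmless since $T$ is fixed in the statement through $\|\rho\|_{L^\infty((0,T)\times\mathbb{R}^d)}$; and the citation route has the further payoff that the same references \cite{A,K} yield the weighted estimate for $v^0$ merely in $\Lambda_1(\mathbb{R}^d)$, which is how Theorem \ref{refined} weakens the hypothesis of Theorem \ref{main2} --- to reproduce that by your method one would have to track the extra weight $t^{-1/2}$ through the initial-layer term.
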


For a more complete insight into these properties, it will be
convenient to recall some facts about the Zygmund class and its
role in elliptic and parabolic regularity. However, in order not
to introduce some not really necessary notation before the proof
of our main results, we prefer to postpone the proof of
Proposition \ref{pro1} and Proposition \ref{pro2} to Section
\ref{sec:zygmund}. Indeed, in Section \ref{sec:zygmund} we will develop a
more rigorous discussion about the log-Lipschitz estimates, and
thanks to some refined parabolic regularity we will also prove a
slight strengthening of Theorem \ref{main2}.


\subsection{Elementary notions of optimal transport}
Given $\rho_0,\rho_1 \in \MeasuresTwo$,
we define the Wasserstein distance between $\rho_0$ and $\rho_1$ as
\[
W_2(\rho_0,\rho_1)=\left(\int_{\mathbb{R}^d}|x-\mathcal{T}(x)|^2\,\rho_0(x)\,dx\right)^{\frac12},
\]
where $\mathcal{T}$ is the unique optimal transport map between
$\rho_0$ and $\rho_1$, that is, the map
$\mathcal{T}:\mathbb{R}^d\to \mathbb{R}^d$ which minimizes
$\int_{\mathbb{R}^d}|x-\mathcal{S}(x)|^2\,\rho_0(x)\,dx$ among all
the Borel maps $\mathcal{S}:\mathbb{R}^d\to \mathbb{R}^d$ satisfying
$\mathcal{S}_\#\rho_0=\rho_1$. We recall that $\mathcal{S}_\#\rho_0=\rho_1$ means that
$\int_{\rd} \varphi(x)\rho_1(x)\,dx = \int_{\rd}
\varphi(\mathcal{S}(x))\rho_0(x)\,dx$ for every continuous and bounded
function $\varphi:\rd\to\rd$.

The Wasserstein geodesic between $\rho_0$ and $\rho_1$ is the
curve $s\in[0,1]\mapsto \rho^s\in \MeasuresTwo$ defined by the
so-called displacement interpolation along the optimal transport
map $\mathcal{T}$ between $\rho_0$ and $\rho_1$, that is,
$\rho^s:=((1-s)\id+s\mathcal{T})_\#\rho_0$. In particular, for any
$s$, $\mathcal{T}_s:= (1-s)\id+s\mathcal{T}$ is the optimal map
between $\rho_0$ and $\rho^s$ and there holds
$W_2(\rho^r,\rho^s)=|s-r|W_2(\rho_0,\rho_1)$.

We recall a formula for the differentiation of the squared Wasserstein
distance  along solutions of the continuity equation. Let
$t\in [0,T]\mapsto \rho_t\in \MeasuresTwo$ be a weakly continuous
curve which is distributional solution of
\begin{equation*}
\partial_t\rho_t+\div(\xi_t\rho_t)=0,
\end{equation*}
for some Borel velocity field $\xi_t$ such that
$\int_0^T\|\xi_t\|^2_{L^2(\mathbb{R}^d,\rho_t;\mathbb{R}^d)}\,dt<+\infty$.
Then the curve is absolutely continuous with respect to the
Wasserstein distance, \cite[Theorem 8.3.1]{AGS}. Then, for any
$\bar\rho\in\MeasuresTwo$, it holds
\begin{equation}\label{timederivative}
\frac12\frac{d}{dt} W_2^2(\rho_t,\bar\rho)=\int_{\mathbb{R}^d}\langle \xi_t(x),x-\mathcal{T}_t(x)\rangle\,\rho_t(x)\,dx,\qquad \mbox{for $\mathcal{L}^1$-a.e. $t\in (0,T)$},
\end{equation}
where $\mathcal{T}_t$ is the optimal map between $\rho_t$ and
$\bar\rho$ (see \cite[Theorem 8.4.7, Remark 8.4.8]{AGS}).

Finally, let us recall an estimate relating the $2$-Wasserstein
distance and the $H^{-1}$ norm proved in \cite[Proposition
2.8]{L}. Given two nonnegative densities with the same mass
$\rho_1,\rho_2\in \MeasuresTwo\cap L^\infty(\mathbb{R}^d)$, there
holds
\begin{equation}\label{h-1}
\|\rho_1-\rho_2\|_{\dot H^{-1}(\mathbb{R}^d)}\le \max\{\|\rho_1\|_{∞},\|\rho_2\|_∞\}^{1/2}W_2(\rho_1,\rho_2).
\end{equation}
Here we are letting $\dot H^1(\mathbb{R}^d)$ be the space of
Lebesgue measurable functions $v:\rd\to\R$ such that $\|\nabla
v\|_{L^2(\mathbb{R}^d)}<+\infty$, so that $\dot
H^{-1}(\mathbb{R}^d)$ is defined by duality with functions having
finite $L^2(\mathbb{R}^d)$ norm of the gradient only. By the way,
we can also consider the space
$H^1(\mathbb{R}^d)=W^{1,2}(\mathbb{R}^d)$. In fact, from the proof
in \cite[Proposition 2.8]{L} it is not difficult to see that the
same estimate holds considering the  $H^{-1}(\mathbb{R}^d)$ space
given by duality with the full norm $(\|\nabla
v\|^2_{L^2(\mathbb{R}^d)}+\|v\|^2_{L^2(\mathbb{R}^d)})^{1/2}$.


\section{Bounded solutions as gradient flows: EVI and uniqueness}\label{sec:main}

The uniqueness Theorems \ref{main1} and \ref{main2} are
consequences of a general result interpreting bounded solutions to
\eqref{ks} (resp. \eqref{pe}) as the trajectory of the {\it
gradient flow} of the functional \eqref{fullfunctional} (resp.
\eqref{functional2}) in the appropriate metric setting. We prove
that bounded solutions satisfy a family of evolution variational
inequalities (EVI). Among different notions of gradient flow in
metric sense, the EVI formulation is stronger than other
formulations and typically corresponding to a convex structure, as
in \cite[Theorem 11.2.1]{AGS} for the theory in the Wasserstein
setting.

\textbf{Notation for the energy functional}. Before giving the
proof, we introduce some uniform notation for working with the
full functional \eqref{fullfunctional} even in the
parabolic-elliptic case. Let $\rho\in\MeasuresTwo\cap
L^{\infty}(\mathbb{R}^d)$. We are considering the free energy
functional
$$
\FF_{\eps,\alpha}(\rho,v):=\int_{\mathbb{R}^d}(\rho\log\rho-v\rho)\,
dx +\frac12\int_{\mathbb{R}^d}(|\nabla v|^2+\alpha v^2)\, dx\,,
$$
defined for $v$ being any $W^{1,2}(\mathbb{R}^d)$ function if
$\eps>0$. On the other hand, if $\eps=0$ it is understood that $v$
is given by $\mathcal{B}_{\alpha,d}\ast\rho$. Therefore the
parameter $\eps$ only indicates if we are considering problem
\eqref{ks} or \eqref{pe}. In particular, this writing of the
functional as in \eqref{fullfunctional} is valid in general, even
for $\eps=0$, except for two particular cases: $\eps=\alpha=0$ and
$d=1,2$, as discussed in the introduction. In these two cases, we
need to renormalize the free energy functional. Given $\rho^*\in
\MeasuresTwo$ a smooth and compactly supported density and
$v^*=\mathcal{B}_{0,d}\ast\rho^*$, we redefine
\eqref{fullfunctional} for $\eps=\alpha=0$ and $d=1,2$ as
\begin{equation}\label{renorm}
\FF_{0,0}(\rho,v):=\int_{\mathbb{R}^d} \left[\rho\log\rho-
v(\rho-\rho^*)\right] \,dx +\frac12\int_{\mathbb{R}^d}|\nabla
(v-v^*)|^2\, dx-\frac12 \int_{\mathbb{R}^d}\rho^*v^*\,dx\,.
\end{equation}
Notice that $\nabla(v-v^*)\in L^{2}(\mathbb{R}^d)$, as
$\rho-\rho^*$ has zero mean, see \cite{AMS,SV} for more details.

In the rest of this work, when referring to the free energy
functional $\FF_{\eps,\alpha}(\rho,v)$, we will be using
\eqref{fullfunctional} for any $\eps\ge 0, \alpha\ge 0$, except
for $\eps=\alpha=0$ and $d=1,2$ where the free energy functional
is given by \eqref{renorm}.

Let us  observe that now all the integrals involved in the definition of $\FF_{\eps,\alpha}$
are well defined and finite for $\eps\geq 0, \alpha \ge 0$ and $\rho,v$ as above. \BBB The negative part of the entropy term can be
classically treated by the Carleman inequality, see for instance
\cite[Lemma 2.2]{BCC} where the second moment bound on the density
is used. The boundedness of the density controls the positive
contribution of the entropy term together with the integrability
of $v\rho$ in case $\eps>0$ since $v\in W^{1,2}(\R^d)$.
For $\eps=0$ the integrability of $v\rho$ in case $\alpha> 0$ is
implied by the Newtonian potential case $\alpha=0$ since the
singularity of the Bessel potential at the origin is the same. The
integrability for $\alpha=\eps=0$ and $d\geq 3$ results directly
from the Hardy-Littlewood-Sobolev inequality for the Newtonian
potential. For $\alpha=\eps=0$ and $d=1,2$ we use the behavior at
infinity of the density $\rho$. Actually, $\alpha=\eps=0$ and
$d=1$ is a trivial case since the Newtonian potential is given by
$\mathcal{B}_{0,1}(x)=|x|$. For $\alpha=\eps=0$ and $d=2$ since
$\log (e+|x|^2)\rho \in L^1(\R^d)$ then $v\rho\in L^1(\R^d)$ using
the logarithmic HLS inequality, see for instance \cite{BCC2}.


\textbf{Notation for the ambient metric space}. We let
$X_\eps:=\MeasuresTwo \times L^2(\rd)$ endowed with the distance
$$
D^2(z_1,z_2)=
D^2((\rho_1,v_1),(\rho_2,v_2))=W_2^2(\rho_1,\rho_2)+\eps\|v_1-v_2\|^2_{L^2(\rd)}\,,
$$
with the convention that $X_0=\MeasuresTwo$ and
$D_0(z_1,z_2)=W_2(\rho_1,\rho_2)$. Moreover, for $z=\rho\in
X_0\times L^\infty({\mathbb{R}^d})$, $\FF_{0,\alpha}(z)$ will be
understood to be  $\FF_{0,\alpha}(\rho,v)$ with
$v=\mathcal{B}_{\alpha,d}\ast\rho$, as usual when $\eps=0$.

In the space $X_\eps$ the metric derivative of an absolutely
continuous curve $t\mapsto z_t$ is denoted and defined by
$$|z'|_{D}(t) = \lim_{h\to 0}\frac{D(z_{t+h},z_t)}{h},$$ and it
exists for $\mathcal{L}^1$-a.e. $t>0$. The local metric slope of
the functional $\FF_{\eps,\alpha}$ is defined by
$$
|\partial\FF_{\eps,\alpha}|_{D}(z):=\limsup_{D(\zeta,z)\to 0}\frac{(\FF_{\eps,\alpha}(z)-\FF_{\eps,\alpha}(\zeta))^+}{D(\zeta,z)}.
$$
These two abstractly defined objects are used to give the notion
of {\it curves of maximal slope} in general metric setting, see
\cite[§3]{AG}, \cite[Chapter 1]{AGS}. The main consequences of
this gradient flow structure are summarized in the following
result.

Before stating the Theorem we define the function $\omega:[0,+\infty)\to [0,+\infty)$
by \begin{equation}\label{omegadef}\omega(x)=\sqrt{\mass x\varphi(\mass^{-1}x)},\end{equation} where $\varphi$
is defined in \eqref{varphi}.
Moreover, given a fixed $s_0>0$, we define a strictly monotone continuous function
$G:[0,+\infty)\to [-\infty,+\infty)$ by
${G(s):=\int_{s_0}^s \frac{1}{\omega(r)}\,dr}$ for $s>0$ and $G(0)=-\infty$
(we observe that $G^{-1}: [-\infty,+\infty)\to [0,+\infty)$ is surjective).

\begin{theorem}\label{th:GFEVI}
Let $t\mapsto z_t=(\rho_t,v_t)$ be a bounded solution of problem
\eqref{ks} for $\eps>0$,  starting from $z^0=(\rho^0,v^0)\in
X_\eps \cap \big(L^\infty(\rd) \times (W^{1,2}(\rd)\cap
W^{2,\infty}(\rd))\big)$, according to {\rm Definition
\ref{def2}}. If $\eps=0$, let $z_t=\rho_t$ be a bounded solution
to problem \eqref{pe}, starting from $z^0=\rho^0\in X_0\cap
L^{\infty}(\rd)$, according to Definition {\rm \ref{def1}}. Then
the three following properties hold:
\begin{itemize}\item[\it i)]
The evolution variational inequality (EVI) formulation:
for any $\bar z=(\bar\rho,\bar v)\in X_\eps\cap \big(L^\infty(\rd) \times W^{1,2}(\rd)
\big)$ (reduced to $\bar z=\bar \rho\in X_0\cap L^\infty(\mathbb{R}^d)$ if $\eps=0$),
the map $t\mapsto D^2(z_t,\bar z)$ is absolutely continuous and there exists a constant $C$ depending on
$\|\rho\|_{L^{\infty}((0,T)\times\rd)}$,
$\|\bar\rho\|_{L^{\infty}(\rd)}$ and
$\|v^0\|_{W^{2,\infty}(\mathbb{R}^d)}$, such that
\begin{equation}\label{EVI}
    \frac{1}2\frac{d}{dt} D^2(z_t,\bar z) \le\,
\FF_{\eps,\alpha}(\bar z) - \FF_{\eps,\alpha}(z_t)+ C
\omega(D^2(z_t,\bar z))  \quad \text{for $\mathcal{L}^1$-a.e. }
t\in (0,T).
\end{equation}

 \item[\it ii)] The energy dissipation equality (EDE) in metric
sense: the map $t\mapsto \FF_{\eps,\alpha}(z_t)$ is locally
Lipschitz continuous and
\begin{equation}\label{EDE}
\frac{d}{dt}{\FF_{\eps,\alpha}}(z_t)=-\frac12
|\partial{\FF_{\eps,\alpha}}|_{D}^2(z_t)-\frac12|z^\prime|_{D}^2(t)
\quad \text{for $\mathcal{L}^1$-a.e. } t\in (0,T).
\end{equation}
\item[\it iii)] The following expansion control property:
given another bounded solution $t\mapsto\zeta_t$, with initial
datum $\zeta^0$ in the same space of $z^0$ above, there exists a constant $C$,  depending on  $\|\rho\|_{L^{\infty}((0,T)\times\rd)}$
 and
$\|v^0\|_{W^{2,\infty}(\mathbb{R}^d)}$ (and the same quantities associated to $\zeta$), such that there holds
\begin{equation}\label{contractivity}
D^2(z_t,\zeta_t)\le G^{-1}(G(D^2(z^0,\zeta^0))+4Ct) \quad \text{for every } t\in [0,T).
\end{equation}

\end{itemize}
\end{theorem}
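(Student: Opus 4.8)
The plan is to derive all three statements from the differentiation formula \eqref{timederivative} applied to the continuity equation satisfied by bounded solutions, combined with the quasi-Lipschitz estimates of Propositions \ref{pro1} and \ref{pro2}. First I would identify the velocity field: for the density equation $\partial_t\rho_t+\div(\xi_t\rho_t)=0$ with $\xi_t=\nabla v_t-\nabla\log\rho_t$, the boundedness of $\rho$ together with the Fisher information bound \eqref{dissipation} guarantees $\int_0^T\|\xi_t\|^2_{L^2(\rho_t)}\,dt<+\infty$, so the curve is absolutely continuous in $W_2$ and \eqref{timederivative} applies. In the parabolic case one must also track the $v$-component, writing the full distance $D^2(z_t,\bar z)=W_2^2(\rho_t,\bar\rho)+\eps\|v_t-\bar v\|^2_{L^2}$ and differentiating the second term directly using the equation $\eps\partial_t v=\Delta v+\rho-\alpha v$.

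For part \emph{i)}, I would split $\frac12\frac{d}{dt}W_2^2(\rho_t,\bar\rho)=\int\langle\nabla v_t-\nabla\log\rho_t,\,x-\mathcal{T}_t\rangle\rho_t\,dx$ into an entropy/diffusion part and a drift part. The diffusion part $-\int\langle\nabla\log\rho_t,x-\mathcal{T}_t\rangle\rho_t\,dx$ is handled by the above-the-tangent characterization of displacement convexity of the entropy (McCann), which yields exactly $\int\bar\rho\log\bar\rho-\int\rho_t\log\rho_t$ up to the correct sign. The interaction part $\int\langle\nabla v_t,x-\mathcal{T}_t\rangle\rho_t\,dx$ is where the quasi-Lipschitz bound enters: one adds and subtracts $\nabla v_t(\mathcal{T}_t(x))$, estimates the resulting remainder by \eqref{loglipschitz} (resp.\ \eqref{parabolicloglipschitz}) to produce a term controlled by $\int\varphi(|x-\mathcal{T}_t|^2)\rho_t\,dx$, and then applies Jensen's inequality with the concave $\varphi$ to bound this by $\omega(W_2^2(\rho_t,\bar\rho))$ with $\omega$ as in \eqref{omegadef}. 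The remaining genuinely convex contribution reassembles the difference $\FF_{\eps,\alpha}(\bar z)-\FF_{\eps,\alpha}(z_t)$ in the potential and Dirichlet energies, with the $\eps\|v_t-\bar v\|^2$ derivative supplying precisely the terms needed to close the $v$-dependent part of the functional.

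For part \emph{ii)}, the EDE follows once \emph{i)} is available by the standard metric gradient-flow argument: the EVI with the semiconvexity modulus $\omega$ forces $t\mapsto\FF_{\eps,\alpha}(z_t)$ to be locally Lipschitz and yields both the chain-rule inequality and its reverse, so that the metric slope and the metric speed balance as in \eqref{EDE}; here I would invoke the abstract machinery of \cite[Chapter 1]{AGS} identifying curves of maximal slope. For part \emph{iii)}, I would apply \emph{i)} symmetrically to the two solutions $z_t,\zeta_t$, using each as the fixed competitor for the other, so that the functional differences cancel and one is left with the differential inequality $\frac12\frac{d}{dt}D^2(z_t,\zeta_t)\le 2C\,\omega(D^2(z_t,\zeta_t))$. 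Integrating this Osgood-type inequality using the definition of $G$ and the surjectivity of $G^{-1}$ gives \eqref{contractivity}; uniqueness (Theorems \ref{main1}, \ref{main2}) is then immediate since $D^2(z^0,\zeta^0)=0$ forces $G(0)=-\infty$ and hence $D(z_t,\zeta_t)\equiv 0$.

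The main obstacle I anticipate is the rigorous justification of the interaction-term estimate and the application of \eqref{timederivative}. The formula \eqref{timederivative} requires the square-integrability of $\xi_t$ against $\rho_t$, which rests delicately on combining the $L^\infty$ bound with the Fisher information; and the decomposition of $\int\langle\nabla v_t,x-\mathcal{T}_t\rangle\rho_t\,dx$ must be carried out so that the ``good'' part genuinely matches the convexity of $-\int v\rho+\frac12\int(|\nabla v|^2+\alpha v^2)$ along the displacement interpolation, while the ``bad'' part is purely the $\omega$-remainder. Getting the constants and signs to align, especially in the parabolic case where the $v$-equation contributes its own cross terms through $\eps\frac{d}{dt}\|v_t-\bar v\|^2_{L^2}$, is the technically demanding step.
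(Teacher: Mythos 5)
Your overall architecture coincides with the paper's: identification of the velocity field $\xi_t=\nabla v_t-\nabla\log\rho_t$ and the use of \eqref{timederivative}; the above-the-tangent inequality for the entropy; the quasi-Lipschitz estimates of Propositions \ref{pro1}--\ref{pro2} combined with Jensen's inequality for the concave $\varphi$ to produce the $\omega$-remainder; the differentiation of $\eps\|v_t-\bar v\|^2_{L^2}$ through the parabolic equation; the EVI-to-EDE implication via the abstract metric theory; and the symmetric doubling argument with Osgood-type integration via $G$ for part \emph{iii)}. However, there is one genuine gap, and it sits exactly at the point you flag as ``the technically demanding step'' without resolving it. When the Wasserstein inequality (the analogue of \eqref{evi2}) is combined with the identity for the quadratic part of the functional (the paper's \eqref{lunga}--\eqref{kkk}), the difference $\FF_{\eps,\alpha}(\bar z)-\FF_{\eps,\alpha}(z_t)$ does \emph{not} reassemble cleanly: one is left with an extra cross term $\int_{\rd}(\bar\rho-\rho_t)(\bar v-v_t)\,dx$ together with the negative quadratic terms $-\frac12\|\nabla(v_t-\bar v)\|^2_{L^2}-\frac{\alpha}{2}\|v_t-\bar v\|^2_{L^2}$. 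This cross term has no sign, couples the density discrepancy with the chemoattractant discrepancy, and nothing in your plan controls it. The paper's key ingredient here is the Loeper-type estimate \eqref{h-1}, $\|\rho_1-\rho_2\|_{\dot H^{-1}(\rd)}\le\max\{\|\rho_1\|_{\infty},\|\rho_2\|_{\infty}\}^{1/2}W_2(\rho_1,\rho_2)$: by $\dot H^{1}$--$\dot H^{-1}$ duality and Young's inequality the cross term is bounded by $\frac12 Q\,W_2^2(\bar\rho,\rho_t)+\frac12\|\nabla(\bar v-v_t)\|^2_{L^2}$, the Dirichlet half of which is exactly cancelled by the negative quadratic term, while the $W_2^2$ half is absorbed into $C\omega(W_2^2)$ using $\sqrt{\mass\varphi(\mass^{-1}x^2)}\ge x$ (see \eqref{e} and \eqref{generalEVI}; in the parabolic-elliptic case the same is done in the $H^{1}_\alpha$/$H^{-1}_\alpha$ norms). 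Without this ingredient the EVI \eqref{EVI} does not close, and parts \emph{ii)} and \emph{iii)} fall with it.

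A secondary imprecision: to relate the drift term $\int\langle\nabla v_t(x),x-\mathcal{T}_t(x)\rangle\rho_t(x)\,dx$ to the potential-energy difference, the correct decomposition writes $\int v_t(\bar\rho-\rho_t)\,dx=\int_0^1\int\langle\nabla v_t(x_t^s),\mathcal{T}_t(x)-x\rangle\rho_t(x)\,dx\,ds$ along the displacement interpolation $x_t^s=(1-s)x+s\mathcal{T}_t(x)$ (fundamental theorem of calculus), rather than adding and subtracting the endpoint value $\nabla v_t(\mathcal{T}_t(x))$ as you propose: the resulting ``good'' term $\int\langle\nabla v_t(\mathcal{T}_t(x)),x-\mathcal{T}_t(x)\rangle\rho_t(x)\,dx$ cannot be compared with $\int v_t(\rho_t-\bar\rho)\,dx$ without convexity of $v_t$, which is unavailable. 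The remainder $\int_0^1\int\langle\nabla v_t(x_t^s)-\nabla v_t(x),\mathcal{T}_t(x)-x\rangle\rho_t(x)\,dx\,ds$ is then estimated exactly as you describe, since $|x_t^s-x|\le|\mathcal{T}_t(x)-x|$ and $\varphi$ is nondecreasing and concave.
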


\begin{proof}

We first introduce the auxiliary functional
\begin{equation*}
\Phi_{\eps,\alpha}(\rho,v):=
\int_{\mathbb{R}^d}(\rho\log\rho-v\rho)\,dx,
\end{equation*}
for $\rho$ and  $v$ being as in the definition of $\FF_{\eps,\alpha}$ at the beginning of this section, so that
\begin{equation*}
\FF_{\eps,\alpha}(\rho,v)=\Phi_{\eps,\alpha}(\rho,v)
+\frac12\int_{\mathbb{R}^d}(|\nabla v|^2+\alpha v^2)\, dx\,
\end{equation*}
and
\begin{equation}\label{renormphi}
\Phi_{0,0}(\rho,v) = \FF_{0,0}(\rho,v) -\frac12\int_{\mathbb{R}^d}|\nabla
(v-v^*)|^2\, dx+\frac12
\int_{\mathbb{R}^d}\rho^*v^*\,dx-\int_{\mathbb{R}^d}\!\rho^*v\,dx\,\quad\mbox{for $d=1,2$.}
\end{equation}

The proof is organized in four steps.

\vskip 6pt

{\it{Step1. Quasi-Lipschitz Estimate implies control of the
evolution of the Wasserstein distance.-}}
Thanks to the assumption \eqref{dissipation}, we learn that the Fisher
information
${\int_{\mathbb{R}^d}\frac{|\nabla\rho_t(x)|^2}{\rho_t(x)}\,dx}$ is finite for $\mathcal{L}^1$-a.e. $t\in (0,T)$. Let $\bar \rho\in \MeasuresTwo\cap L^{\infty}(\mathbb{R}^d)$. Exploiting the
differentiability properties of the entropy functional, we can use
the above-the-tangent formulation of displacement convexity to get
for $\mathcal{L}^1$-a.e. $t\in (0,T)$
\begin{equation}\label{wassersteindiff}
\int_{\mathbb{R}^d}\bar\rho(x)\log\bar\rho(x)\,dx-\int_{\mathbb{R}^d}\rho_t(x)\log\rho_t(x)\,dx\ge
\int_{\mathbb{R}^d}\langle\nabla\rho_t(x),\mathcal{T}_t(x)-x
\rangle\,dx,
\end{equation}
where $\mathcal{T}_t$ denotes the optimal transport map between
$\rho_t$ and $\bar\rho$. We refer to \cite[§3.3.1]{AG} for an intuitive
proof of this fact, and to \cite[Chapter 10]{AGS} for the theory
in full generality. In particular, the finiteness of the Fisher
information of $\rho_t$ implies that the second term is finite, so
that this differentiation formula is meaningful.
 If $\eps>0$ (resp. $\eps=0$), let $\bar v\in W^{1,2}(\mathbb{R}^d)$ (resp. $\bar v=\mathcal{B}_{\alpha,d}\ast\bar\rho$).
Take
$$
I_t:=\Phi_{\eps,\alpha}(\bar\rho,\bar
v)-\Phi_{\eps,\alpha}(\rho_t,v_t) +\int_{\mathbb{R}^d}(\bar v(x)-
v_t(x))\bar\rho(x)\,dx\,.
$$
Using the notation $x^s_t:=(1-s)x+s\mathcal{T}_t(x)$, $s\in [0,1]$,
and taking into account that
\begin{align*}
\int_{\mathbb{R}^d}v_t(x)(\bar\rho(x)-\rho_t(x))\,dx &= \int_{\mathbb{R}^d}(v_t(\mathcal{T}_t(x))-v_t(x))\rho_t(x)\,dx \\
&=\int_{\mathbb{R}^d}(v_t(x_t^1)-v_t(x_t^0))\rho_t(x)\,dx = \int_0^1 \frac{d}{ds} \int_{\mathbb{R}^d}v_t(x_t^s)\rho_t(x)\,dx\,ds
\end{align*}
and \eqref{wassersteindiff}, we obtain for $\mathcal{L}^1$-a.e.
$t\in (0,T)$
\begin{equation*}\label{Iestimate}
\begin{aligned}
I_t&\ge \int_{\mathbb{R}^d}\langle\nabla\rho_t(x),\mathcal{T}_t(x)-x
\rangle\,dx
-\int_{\mathbb{R}^d}v_t(x)(\bar\rho(x)-\rho_t(x))\,dx\\
&=  \int_{\mathbb{R}^d}\langle\nabla\rho_t(x),\mathcal{T}_t(x)-x
\rangle\,dx
-\int_0^1\int_{\mathbb{R}^d}\langle \nabla v_t(x^s_t),\mathcal{T}_t(x)-x\rangle\,\rho_t(x)\,dx\,ds\\
&=\int_{\mathbb{R}^d}\langle\nabla\rho_t(x)-\rho_t(x)\nabla v_t(x),\mathcal{T}_t(x)-x
\rangle\,dx -\int_0^1\int_{\mathbb{R}^d}\langle \nabla
v_t(x^s_t)-\nabla v_t(x),\mathcal{T}_t(x)-x\rangle\,\rho_t(x)\,dx\,ds.
\end{aligned}
\end{equation*}
Let us denote by $II_t$ the last term in the right hand side
above. The crucial point is to treat such term using the
log-Lipschitz property of $\nabla v$. Notice that, if $\eps=0$, we
are in the assumptions of Proposition \ref{pro1} and we apply
\eqref{loglipschitz}, where the constant $C$ depends in principle
only on ($\mass,\alpha, d$ and) the
$L^\infty$ norm of $\rho_t$, which we are assuming to be uniformly bounded on
$(0,T)$. In the case $\eps>0$, still by the uniform space-time
$L^\infty$ assumption on $\rho_t$ and the $W^{2,\infty}$
assumption on $v^0$, we are in the framework of Proposition
\ref{pro2}, so that we can apply the estimate
\eqref{parabolicloglipschitz}. In this case the constant will
depend also on ($\eps$ and) $\|v^0\|_{W^{2,\infty}(\mathbb{R}^d)}$.
 Since $\varphi$ is concave, we can
also use the Jensen inequality, and letting $\rho^s_t={x^s_t}
_\#\rho_t$ be the Wasserstein geodesic connecting $\rho_t$ and $\bar\rho$  we have
\begin{equation}\label{perdopo}
\begin{aligned}
\left|II_t\right|
 &\le  W_2(\rho_t,\bar\rho) \int_0^1\left(\int_{\mathbb{R}^d} |\nabla v_t(x^s_t)-\nabla v_t(x)|^2\rho_t(x)\,dx\right)^{1/2}\,ds\\
 &\le  C W_2(\rho_t,\bar\rho) \int_0^1\left(\int_{\mathbb{R}^d} \varphi(|x^s_t-x|^2)\rho_t(x)\,dx\right)^{1/2}\,ds\\
& \le \sqrt{\mass} C W_2(\rho_t,\bar\rho)
\int_0^1\sqrt{\varphi(\mass^{-1}W_2^2(\rho_t,\rho^s_t))}\,ds \leq \sqrt{\mass}C
W_2(\rho_t,\bar\rho) \sqrt{\varphi(\mass^{-1}W_2^2(\rho_t,\bar\rho))}\, .
\end{aligned}
\end{equation}
The last inequality holds since geodesic interpolation ensures
$$
\int_{\R^d} |x-x^s_t|^2\rho_t(x)\,dx =
W_2^2(\rho_t,\rho^s_t)=s^2W_2^2(\rho_t,\bar\rho)
$$
for all $s\in [0,1]$ and since $\varphi$ is non decreasing.
We recall that the constant $C$ in \eqref{perdopo} depends
only on ($\eps$, $\alpha, d$, the mass $\mass$ and) the
$L^\infty((0,T)\times\mathbb{R}^d)$ norm of $\rho$ and, in the case $\eps>0$, the $W^{2,\infty}(\mathbb{R}^d)$ norm of $v^0$.
Inserting
this in the estimate for $I_t$, we have for $\mathcal{L}^1$-a.e.
$t\in (0,T)$
\begin{equation}\label{I_t}
I_t\ge\int_{\mathbb{R}^d}\langle{\nabla\rho_t}(x)-\rho_t(x)\nabla
v_t(x),\mathcal{T}_t(x)-x \rangle\,dx - C \omega(W_2^2(\rho_t,\bar\rho))\,,
\end{equation}
where $\omega$ is the function defined in \eqref{omegadef}. Since
$\rho_t$ satisfies the continuity equation
$$
\partial_t\rho_t+\div(\xi_t\rho_t)=0 \qquad \mbox{with }\quad
\rho_t\xi_t = -{\nabla\rho_t}+\rho_t\nabla v_t
$$
and \eqref{dissipation}, the uniform $L^\infty$ bound of $\rho_t$
implies that $\int_0^T\|\xi_t\|^2_{L^2(\mathbb{R}^d,\rho_t;\mathbb{R}^d)}\,dt<+\infty$. Therefore
$t\mapsto\rho_t$ is absolutely continuous with respect to $W_2$
and by \eqref{timederivative}
\begin{equation*}
\frac12\frac{d}{dt}W_2^2(\rho_t,\bar\rho)=\int_{\mathbb{R}^d}\langle\nabla
\rho_t(x)-\rho_t(x)\nabla
v_t(x),\mathcal{T}_t(x)-x\rangle\,dx\qquad \mbox{for
$\mathcal{L}^1$-a.e. t}\in (0,T).
\end{equation*}
Inserting this into \eqref{I_t}, and recalling the definition of $I_t$, we finally obtain
\begin{equation}\label{evi2}
\frac12\frac{d}{dt}W_2^2(\rho_t,\bar\rho)\le
\Phi_{\eps,\alpha}(\bar\rho,\bar
v)-\Phi_{\eps,\alpha}(\rho_t,v_t)+\int_{\mathbb{R}^d}
(\bar v-v_t)\bar\rho\,dx + C
\omega(W_2^2(\rho_t,\bar\rho))
\end{equation}
for $\mathcal{L}^1$-a.e. $t\in (0,T)$.

\vskip 6pt

{\it{Step 2: EVI for the parabolic-parabolic case.-}}
Recalling that $\bar v\in W^{1,\,2}(\mathbb{R}^d)$,
observing that $\Delta v_t\in L^{2}(\mathbb{R}^d)$ for a.e.-$t\in (0,T)$ and using
the elementary identity $|a|^2-|b|^2=|a-b|^2 +2\la b, a-b \ra$ for every $a,b\in \R^k$,
 the variation of the second part of the functional
\eqref{fullfunctional} (that is, $\FF_{\eps,\alpha}-\Phi_{\eps,\alpha}$) can be written as
\begin{equation}\label{lunga}\begin{aligned}
\frac12\int_{\mathbb{R}^d}\left[|\nabla\bar v|^2-\right.&\left.|\nabla
v_t|^2+\alpha(\bar v^2- v_t^2)\right]\,dx
\\&=\int_{\mathbb{R}^d}(\alpha v_t-\Delta v_t)(\bar v-v_t)\,dx+\frac12 \|\nabla(v_t-\bar v)\|^2_{L^2(\mathbb{R}^d)}+\frac{\alpha}2\|v_t-\bar v\|^2_{L^2({\mathbb{R}^d})}\\
&=\int_{\mathbb{R}^d}(\rho_t-\eps\partial_t v_t)(\bar v-v_t)\,dx+\frac12 \|\nabla(v_t-\bar v)\|^2_{L^2(\mathbb{R}^d)}+\frac{\alpha}2\|v_t-\bar v\|^2_{L^2({\mathbb{R}^d})}\\
&=\int_{\mathbb{R}^d}\rho_t(\bar v-v_t)\,dx
+\frac{\eps}2\frac{d}{dt}\|v_t-\bar v\|_{L^2(\mathbb{R}^d)}^2
+\frac12 \|\nabla(v_t-\bar v)\|^2_{L^2(\mathbb{R}^d)}+\frac{\alpha}2\|v_t-\bar v\|^2_{L^2({\mathbb{R}^d})}.
\end{aligned}\end{equation}
Therefore, we deduce
\begin{equation}\label{kkk}
\begin{aligned}
\FF_{\eps,\alpha}(\bar\rho,\bar
v)-\FF_{\eps,\alpha}(\rho_t,v_t)&=\Phi_{\eps,\alpha}(\bar\rho,\bar
v)-\Phi_{\eps,\alpha}(\rho_t,v_t)+\int_{\mathbb{R}^d}\rho_t(\bar
v-v_t)\,dx\\&\quad+\frac{\eps}2\frac{d}{dt}\|v_t-\bar
v\|_{L^2(\mathbb{R}^d)}^2+\frac12 \|\nabla(v_t-\bar v)\|^2_{L^2(\mathbb{R}^d)}+\frac{\alpha}2\|v_t-\bar v\|^2_{L^2({\mathbb{R}^d})}.
\end{aligned}
\end{equation}
Now, we use again \eqref{evi2},
leading to
\begin{equation}\label{ev}
\begin{aligned}
\frac{\eps}2\frac{d}{dt}\|v_t-\bar
v\|_{L^2(\mathbb{R}^d)}^2&+\frac12\frac{d}{dt}W_2^2(\rho_t,\bar\rho)\le\,
\FF_{\eps,\alpha}(\bar\rho,\bar
v)-\FF_{\eps,\alpha}(\rho_t,v_t)+ C
\omega(W_2^2(\rho_t,\bar\rho))\\
&\quad+ \int_{\mathbb{R}^d} (\bar \rho -\rho_t)(\bar v-v_t)\,dx-\frac12 \|\nabla(v_t-\bar v)\|^2_{L^2(\mathbb{R}^d)}-\frac{\alpha}2\|v_t-\bar v\|^2_{L^2({\mathbb{R}^d})}.
\end{aligned}
\end{equation}
By using the duality
between $\dot H^1$ and $\dot H^{-1}$, the Young inequality, and
\eqref{h-1} we have
\begin{equation}\label{e}
\begin{aligned}
\int_{\mathbb{R}^d} (\bar \rho -\rho_t)(\bar v-v_t)\,dx&\le \|\bar \rho -\rho_t\|_{\dot H^{-1}(\mathbb{R}^d)}\|\bar v-v_t\|_{\dot H^1(\mathbb{R}^d)}
\le \frac12\|\bar \rho-\rho_t\|^2_{\dot H^{-1}(\mathbb{R}^d)}+\frac12\|\nabla(\bar v-v_t)\|^2_{L^2(\mathbb{R}^d)}\\
&\le \frac12 QW_2^2(\bar\rho,\rho_t)+\frac12\|\nabla(\bar v-v_t)\|^2_{L^2(\mathbb{R}^d)},
\end{aligned}
\end{equation}
where $Q$ is  the largest of the $L^\infty$
norms of $\bar\rho$ and $\rho_t$ over the time interval $(0,T)$.
Taking into account that $\omega$ is given by \eqref{omegadef} and  that $\sqrt{\mass\varphi(\mass^{-1}x^2)}\ge x$ for every $x>0$,
 combining \eqref{ev} and \eqref{e} we get, up to introducing a new
constant $C$,
\begin{equation}\label{generalEVI}
\begin{aligned}
\frac{\eps}2\frac{d}{dt}\|v_t-\bar
v\|_{L^2(\mathbb{R}^d)}^2+\frac12\frac{d}{dt}W_2^2(\rho_t,\bar\rho)&\le\,
\FF_{\eps,\alpha}(\bar\rho,\bar
v)-\FF_{\eps,\alpha}(\rho_t,v_t)+ C
\omega(W_2^2(\rho_t,\bar\rho))
-\frac\alpha2 \|v_t-\bar v\|^2_{L^2({\mathbb{R}^d})}
\end{aligned}
\end{equation}
for a.e. $t\in(0,T)$. The new constant  $C$ depends as usual  on ($\eps$
$\alpha$, $d$, $\mass$ and) $\|\rho\|_{L^\infty((0,T)\times{\mathbb{R}^d})}$,
$\|v^0\|_{W^{2,\infty}(\mathbb{R}^d)}$, $\|\bar\rho\|_{L^\infty(\mathbb{R}^d)}$.

\vskip 6pt

{\it{Step 3: EVI for the parabolic-elliptic case.-}} When either
$d\ge 3$ or $\alpha>0$, we can repeat the proof of the
parabolic-parabolic case, letting $\eps=0$ therein and recalling
that $\bar v$ is no more an arbitrary $W^{1,2}(\mathbb{R}^d)$
function but is given by convolution with $\bar\rho$. In
particular we arrive to the corresponding of \eqref{ev}, and the
second line therein can now be estimated as follows. Using the
inequality $\|v\|_{H^1_\alpha(\mathbb{R}^d)}\leq
\|\rho\|_{H^{-1}_\alpha(\mathbb{R}^d)}$ for $-\Delta v+\alpha
v=\rho$, $\alpha>0$, where  the notation is
$\|v\|^2_{H^1_\alpha(\mathbb{R}^d)}:=\|\nabla
v\|^2_{L^2(\mathbb{R}^d)}+\alpha\|v\|^2_{L^2(\mathbb{R}^d)}$ (and
using $\dot H^1$ if $\alpha=0$),
 we get
\begin{equation*}
\int_{\mathbb{R}^d}(\bar v- v_t)(\bar\rho-\rho_t)\,dx\!\le
\|\bar v-v_t\|_{H^1_\alpha(\mathbb{R}^d)}\|\bar \rho-\rho_t\|_{H^{-1}_\alpha(\mathbb{R}^d)}\le
\frac12\|\bar v-v_t\|_{H^1_\alpha(\mathbb{R}^d)}^2+\frac12\|\bar \rho-\rho_t\|_{H^{-1}_\alpha(\mathbb{R}^d)}^2 .
\end{equation*}
Moreover, recalling the estimate
\eqref{h-1}  (which works both in $\dot H^{-1}$ and $ H^{-1}_\alpha$)
we have
\[\|\bar \rho-\rho_t\|_{H^{-1}_\alpha(\mathbb{R}^d)}\le
Q W_2^2(\bar\rho,\rho_t),\]
for all $t \in [0,T]$, where $Q$ is the largest of the $L^\infty$
norms of $\bar\rho$ and $\rho_t$ over the time interval $[0,T]$. Inserting these estimates in \eqref{ev} we obtain
\begin{equation}\label{generalEVI2}
\begin{aligned}
\frac12\frac{d}{dt}W_2^2(\rho_t,\bar\rho)\le\,
\FF_{0,\alpha}(\bar\rho)-\FF_{0,\alpha}(\rho_t)+ C
\omega(W_2^2(\rho_t,\bar\rho)),
\end{aligned}
\end{equation}
for $\mathcal{L}^1$-a.e. $t\in(0,T)$, where the constant $C$ depends only on $\eps$,
$\alpha$, $d$, $\mass$, $\|\rho\|_{L^\infty((0,T)\times{\mathbb{R}^d})}$,
$\|\bar\rho\|_{L^\infty(\mathbb{R}^d)}$.

In the case $\alpha=0$, $d=1,2$, we have to consider the functional in \eqref{renorm}. By using the identity
$$
\frac12 \|\nabla(\bar v-v^*)\|^2_{L^2(\mathbb{R}^d)} - \frac12
\|\nabla(v_t-v^*)\|^2_{L^2(\mathbb{R}^d)} =
\int_{\mathbb{R}^d}(\rho_t-\rho^*)(\bar v-v_t)\,dx +\frac12
\|\nabla(v_t-\bar v)\|^2_{L^2(\mathbb{R}^d)} ,
$$
with similar computations as in \eqref{lunga}, this time considering $\FF_{0,0}(\rho,v)-\Phi_{0,0}(\rho,v)$ as obtained from \eqref{renormphi}, we can still find
\eqref{kkk} and conclude obtaining again \eqref{generalEVI2}.

\vskip 6pt

{\it{Step 4: Conclusion.-}}
We are ready to prove the three points in the statement of the theorem.
The proof of {\it i)} is a consequence of
\eqref{generalEVI}
for the case $\eps>0$, and \eqref{generalEVI2} for the case $\eps=0$,
taking into account that
$\alpha \geq 0$ and that $\omega(D^2(z_t,\bar z)) \geq
\omega(W_2^2(\rho_t,\bar \rho))$ being $\omega$ increasing.

It is a standard fact that the gradient flow formulation in EVI sense
implies the one in EDE sense in \eqref{EDE}. Indeed, the proof of
{\it{ ii)}} follows from \eqref{EVI} and \eqref{contractivity} and
can be exactly carried out as in \cite[Proposition 3.6]{AG}.

The proof of \eqref{contractivity} still follows from \eqref{EVI}.
Indeed we can apply \cite[Lemma 4.3.4]{AGS} (see also the argument of \cite[Theorem 11.1.4]{AGS})
and obtain that for $\mathcal{L}^1$-a.e.  $t\in (0,T)$
\begin{equation}\label{contr1}
    \frac{1}2\frac{d}{ds} D^2(z_s,\zeta_s){\Big{|}}_{s=t} \le\,
    \frac{1}2\frac{d}{ds} D^2(z_s,\zeta_t){\Big{|}}_{s=t} + \frac{1}2\frac{d}{ds} D^2(z_t,\zeta_s){\Big{|}}_{s=t}
  \le 2C\omega(D^2(z_t,\zeta_t)).
\end{equation}
Here, $C=\max\{C_1,C_2\}$, where $C_1$ is the supremum on $s\in(0,T)$ of the constant in \eqref{EVI} for $z_t$ with $\bar z=\zeta_s$, which is finite since $\zeta\in L^\infty((0,T)\times\mathbb{R}^d)$, and $C_2$ is the same inverting $z$ and $\zeta$.
The estimate \eqref{contr1}   implies
\begin{equation*}
    \frac{d}{dt} D^2(z_t,\zeta_t) \le\,
 4 C\omega(D^2(z_t,\zeta_t)), \quad \text{for $\mathcal{L}^1$-a.e. } t\in (0,T).
\end{equation*}
Since the inequality
\begin{equation*}
     y(t) \le y(0)+ 4 C \int_0^t\omega(y(s))\,ds
\end{equation*}
entails that $y(t) \leq G^{-1}(G(y(0))+4Ct)$, we conclude.
\end{proof}

\begin{proofad}
The main theorems in the introduction are now a straightforward
consequence of the expansion control {\it iii)} in Theorem
\ref{th:GFEVI}. Both Theorems follow from the inequality
\eqref{contractivity} observing that
$G^{-1}(G(0)+4Ct)=G^{-1}(-\infty)=0$.
\end{proofad}


\section{$\omega$-convexity of the functional}\label{sec:convexity}

In this section we show another consequence of the EVI
formulation of bounded solutions. For the functional
$\FF_{\eps,\alpha}$  the following relaxed $\omega$-convexity
along geodesics holds, see \cite{CMV} for $\omega$-convexity of
functionals on measures. We assume that bounded solutions to
\eqref{ks} (resp. \eqref{pe} for $\eps=0$) verify that for some
$T>0$
\begin{equation}\label{linftyesti}
\|\rho_t\|_{L^\infty(\R^d)} \leq R_T(\rho^0,v^0):=R
\left(T,\|\rho^0\|_{L^\infty(\R^d)},\FF_{\eps,\alpha}(\rho^0,v^0)\right),
\quad \text{for $\mathcal{L}^1$-a.e. } t\in (0,T)\,.
\end{equation}
This assumption has been proved in several cases, see the
introduction for more details.

\begin{theorem}\label{th:convexity}
Assume that bounded solutions for the evolutions \eqref{ks} (resp.
\eqref{pe} for $\eps=0$) exist and verify \eqref{linftyesti}.
Then, for every $z^0,z^1 \in X_\eps \cap \big(L^\infty(\rd) \times
(W^{1,2}(\rd)\cap W^{2,\infty}(\rd))\big)$ (reduced to $X_0\cap
L^\infty(\mathbb{R}^d)$ if $\eps=0$) and every geodesic
$s\in[0,1]\to z^s$ of the space $(X_\eps,D)$, connecting $z^0$ to
$z^1$ there holds for all $s\in [0,1]$
\[
\FF_{\eps,\alpha}(z^s)\le
(1-s)\FF_{\eps,\alpha}(z^0)+s\FF_{\eps,\alpha}(z^1)+R_T\left[(1-s)\,\omega(s^2D^2(z^0,z^1))+s\,\omega((1-s)^2D^2(z^0,z^1))\right],
\]
where $R_T:=\max(R_T(z^0),R_T(z^1))$.
\end{theorem}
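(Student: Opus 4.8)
The plan is to read off the $\omega$-convexity directly from the evolution variational inequality \eqref{EVI}, following the abstract scheme that turns an EVI flow into a geodesic convexity estimate (as in \cite[Chapter 4]{AGS}), adapted to the modulus $\omega$. Fix $s\in(0,1)$ and let $t\mapsto z^s_t=(\rho^s_t,v^s_t)$ be the bounded solution of \eqref{ks} (resp. \eqref{pe} if $\eps=0$) with $z^s_0=z^s$. Such a solution exists in the admissible class by the standing hypothesis: the density $\rho^s$ lies in $\MeasuresTwo\cap L^\infty(\rd)$ because the second moment is convex along Wasserstein geodesics and $\|\rho^s\|_{L^\infty(\rd)}\le\max\{\|\rho^0\|_{L^\infty(\rd)},\|\rho^1\|_{L^\infty(\rd)}\}$ by McCann's bound on displacement interpolants, while for $\eps>0$ the component $v^s=(1-s)v^0+sv^1$ stays in $W^{1,2}(\rd)\cap W^{2,\infty}(\rd)$ by linearity (for $\eps=0$ one simply has $v^s=\mathcal{B}_{\alpha,d}\ast\rho^s$). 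I would then apply \eqref{EVI} to this flow twice, once with reference point $\bar z=z^0$ and once with $\bar z=z^1$.

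The second step combines the two inequalities. Multiplying the one with $\bar z=z^0$ by $(1-s)$, the one with $\bar z=z^1$ by $s$, and adding, the energy terms assemble exactly into $(1-s)\FF_{\eps,\alpha}(z^0)+s\FF_{\eps,\alpha}(z^1)-\FF_{\eps,\alpha}(z^s)$. Since $s\mapsto z^s$ is a constant-speed geodesic of $(X_\eps,D)$ we have $D^2(z^s,z^0)=s^2D^2(z^0,z^1)$ and $D^2(z^s,z^1)=(1-s)^2D^2(z^0,z^1)$, so the two error contributions become precisely $(1-s)\,\omega(s^2D^2(z^0,z^1))+s\,\omega((1-s)^2D^2(z^0,z^1))$, matching the right-hand side of the claim. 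It then remains only to show that the surviving left-hand side, the derivative at $t=0$ of $(1-s)D^2(z^s_t,z^0)+sD^2(z^s_t,z^1)$, is nonnegative.

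The heart of the matter is that this combined derivative in fact vanishes. For the Wasserstein part I would use the differentiation formula \eqref{timederivative}: writing $w$ for the velocity field of the geodesic at $z^s$ (so that the optimal maps from $\rho^s$ satisfy $x-\mathcal{T}^{s\to0}(x)=sw(x)$ and $x-\mathcal{T}^{s\to1}(x)=-(1-s)w(x)$ at $\rho^s$-a.e. $x$), for the initial velocity field $\xi_0$ of the flow one gets
\[
(1-s)\,\tfrac12\tfrac{d}{dt}W_2^2(\rho^s_t,\rho^0)\big|_{t=0}+s\,\tfrac12\tfrac{d}{dt}W_2^2(\rho^s_t,\rho^1)\big|_{t=0}=s(1-s)\!\int_{\rd}\!\langle\xi_0,w\rangle\rho^s\,dx-s(1-s)\!\int_{\rd}\!\langle\xi_0,w\rangle\rho^s\,dx=0.
\]
The chemoattractant part cancels in the same manner: the $v$-geodesic is the affine interpolant $v^s$ and the corresponding part of $D^2$ is the Hilbertian $\eps\|\cdot\|^2_{L^2(\rd)}$, so with $\dot v:=\partial_t v^s_t|_{t=0}$ one has $(1-s)\langle\dot v,v^s-v^0\rangle+s\langle\dot v,v^s-v^1\rangle=\langle\dot v,v^s-((1-s)v^0+sv^1)\rangle=0$. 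Combining the two cancellations, the left-hand side is $0$, and the summed EVI collapses, after rearrangement, to the asserted inequality.

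Two points require care, and I expect them to be the main obstacle. First, \eqref{EVI} holds only for a.e. $t$, whereas I evaluate it at $t=0$: this is made rigorous by integrating on $[0,h]$, dividing by $h$ and letting $h\to0^+$, using the local Lipschitz continuity of $t\mapsto\FF_{\eps,\alpha}(z^s_t)$ from \eqref{EDE} and the absolute continuity of $t\mapsto D^2(z^s_t,\bar z)$, so that the right-hand sides pass to their values at $t=0$ while the combined incremental quotient on the left has nonnegative lower limit by the computation above. Second, the constant $C$ in \eqref{EVI} for the flow issued from $z^s$ is, by \eqref{linftyesti}, controlled by $R_T(z^s)$ together with the $L^\infty$ norms of the reference densities and $\|v^s\|_{W^{2,\infty}(\rd)}$; bounding it uniformly in $s$ by the endpoint quantity $R_T=\max\{R_T(z^0),R_T(z^1)\}$ is exactly where McCann's $L^\infty$ bound on the interpolant $\rho^s$ and the affine control of $v^s$ must be invoked, and confirming that this domination by the endpoint data indeed holds is the delicate quantitative step of the argument.
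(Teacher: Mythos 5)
Your proposal shares the paper's overall architecture: run the bounded solution $t\mapsto z^s_t$ issued from the geodesic point $z^s$ (admissible because McCann's displacement convexity of $L^p$ norms bounds $\|\rho^s\|_{L^\infty(\rd)}$ by the endpoints and the $v$-component of a $D$-geodesic is the affine interpolant), apply the EVI \eqref{EVI} with reference points $z^0$ and $z^1$, combine with weights $(1-s)$ and $s$, let $t\to0^+$, and conclude with $D^2(z^s,z^0)=s^2D^2(z^0,z^1)$, $D^2(z^s,z^1)=(1-s)^2D^2(z^0,z^1)$. The gap is in how you dispose of the distance terms on the left-hand side. You argue that the weighted derivative $(1-s)\frac{d}{dt}D^2(z^s_t,z^0)+s\frac{d}{dt}D^2(z^s_t,z^1)$ vanishes at $t=0$, using \eqref{timederivative} and the geodesic velocity field $w$. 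This computation is formally correct but cannot be made rigorous as stated: \eqref{timederivative} holds only for $\mathcal{L}^1$-a.e.\ $t$, the velocity field $\xi_t=-\nabla\rho_t/\rho_t+\nabla v_t$ is itself only defined for a.e.\ $t$ (Definition \ref{def2} gives $\rho_t\in W^{1,1}(\rd)$ only for a.e.\ $t$, so ``$\xi_0$'' need not exist), and the affine relations $x-\mathcal{T}^{s\to0}(x)=sw(x)$, $x-\mathcal{T}^{s\to1}(x)=-(1-s)w(x)$ hold only at $t=0$ exactly. Your proposed repair---integrate the EVI on $[0,h]$ and take a liminf---does not close the gap, because for a.e.\ $r>0$ the weighted combination of derivatives is no longer an exact cancellation (once $\rho^s_r$ has left the geodesic, the optimal maps to $\rho^0$ and $\rho^1$ lose the affine relation), and transferring the cancellation at $r=0$ to the incremental quotient would require continuity in $r$ of $\xi_r$ and of the optimal maps, which is not available in this weak setting.

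The paper avoids differentiation of the distances altogether by a purely metric fact: since
$\min\{(1-s)a^2+sb^2:\ a,b\ge0,\ a+b\ge D\}=s(1-s)D^2$, attained at $a=sD$, $b=(1-s)D$, the triangle inequality shows that the geodesic point $z^s$ \emph{minimizes} $\zeta\mapsto(1-s)D^2(\zeta,z^0)+sD^2(\zeta,z^1)$. Hence the weighted increment of the squared distances over $[0,t]$ is nonnegative for \emph{every} $t>0$, not merely in a limit; combining this with the time-integrated EVI (where the energy monotonicity from \eqref{EDE} is used to replace the time average of $\FF_{\eps,\alpha}(z^s_r)$ by $\FF_{\eps,\alpha}(z^s_t)$) and then letting $t\to0^+$ via lower semicontinuity of the energy yields the claim. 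If you replace your $t=0$ derivative computation by this minimality observation, your argument becomes correct and coincides with the paper's proof. Your second caveat, the domination of the EVI constant by $R_T=\max(R_T(z^0),R_T(z^1))$ through \eqref{linftyesti}, McCann's bound on $\|\rho^s\|_{L^\infty(\rd)}$ and the affine control of $v^s$, is treated in the paper at essentially the same level of detail you indicate, so it is not an additional obstruction.
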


\begin{proof}
We first remark that the set $X_\eps \cap \big(L^\infty(\rd)
\times (W^{1,2}(\rd)\cap W^{2,\infty}(\rd))\big)$ (resp. $X_0\cap
L^\infty(\mathbb{R}^d)$ if $\eps=0$) is geodesically convex. This
is trivial for the part of the functional concerning $v$ while for
the density $\rho$ we use the classical displacement convexity of
all the $L^p$ norms \cite{Mc}. Now, we take $X_\eps \cap
\big(L^\infty(\rd) \times (W^{1,2}(\rd)\cap
W^{2,\infty}(\rd))\big)$ (resp. $X_0\cap L^\infty(\mathbb{R}^d)$
if $\eps=0$) as the set of initial data for the evolutions
\eqref{ks} (resp. \eqref{pe}).

Consider any $\bar z\in X_\eps\cap \left(L^\infty(\mathbb{R}^d)\times W^{1,2}(\mathbb{R}^d)\right)$ (reduced to $\bar z\in X_0\times L^\infty(\mathbb{R}^d)$ if $\eps=0$).
Taking into account that for a bounded solution $t\mapsto
D^2(z_t,\bar z)$ is absolutely continuous and $t\mapsto
\FF_{\eps,\alpha}(z_t)$ is decreasing by \eqref{EDE}, from
\eqref{EVI} we obtain
\begin{equation}\label{EVI2}
    \frac{1}2 D^2(z_t,\bar z) - \frac{1}2 D^2(z_0,\bar z) \le\,
t(\FF_{\eps,\alpha}(\bar z) - \FF_{\eps,\alpha}(z_t)) + C
\int_0^t\omega(D^2(z_r,\bar z))\,dr
\end{equation}
for all $t\in [0,T]$. We denote by $z_t^s$ the
bounded solution of \eqref{ks} or \eqref{pe} starting from the
initial datum $z^s$. We multiply by $(1-s)$ the inequality in
\eqref{EVI2} for $z_t=z_t^s$ and $\bar z= z^0$ and we multiply by $s$
the inequality in \eqref{EVI2} for $z_t=z_t^s$ and $\bar z= z^1$.
Summing up the two inequalities we obtain
\begin{align*}
\frac{1}2 ((1-s)D^2(z^s_t,z^0)+sD^2(z^s_t,z^1)) &- \frac{1}2 ((1-s)D^2(z^s,z^0) +sD^2(z^s,z^1))  \\
 \le\,&
t((1-s)\FF_{\eps,\alpha}(z^0) +s\FF_{\eps,\alpha}(z^1) - \FF_{\eps,\alpha}(z^s_t)) \\
&+ C((1-s)\int_0^t\omega(D^2(z^s_r,z^0))\,dr +s\int_0^t\omega(D^2(z^s_r,z^1))\,dr).
\end{align*}
Using the fact that $s\mapsto z^s$ is a geodesic, the right hand
side is nonnegative, thus
\begin{align*}
\FF_{\eps,\alpha}(z^s_t) -(1-s)\FF_{\eps,\alpha}(z^0)& - s\FF_{\eps,\alpha}(z^1) \\
&\le C((1-s)\frac1t\int_0^t\omega(D^2(z^s_r,z^0))\,dr +s\frac1t\int_0^t\omega(D^2(z^s_r,z^1))\,dr).
\end{align*}
The lower semi continuity of $t\mapsto \FF_{\eps,\alpha}(z^s_t)$
and the continuity of $r\mapsto D^2(z^s_r,z^i)$, $i=0,1$ yield
$$
\FF_{\eps,\alpha}(z^s)\le (1-s)\FF_{\eps,\alpha}(z^0)+s\FF_{\eps,\alpha}(z^1)+C((1-s)\omega(D^2(z^s,z^0))+s\omega(D^2(z^s,z^1))).
$$
Since $s\mapsto z^s$ is a geodesic we have $D^2(z^s,z^0)=
s^2D^2(z^1,z^0)$ and $D^2(z^s,z^1)= (1-s)^2D^2(z^0,z^1)$ and we
conclude.
\end{proof}


\section{A refined result in Zygmund spaces}\label{sec:zygmund}
This section is devoted to give a rigorous justification of the
estimates stated in subsection 2.1. We will also give a slight
improvement of  Theorem \ref{th:GFEVI}
and Theorem \ref{main2} by guaranteing a suitable
quasi-Lipschitz estimate under a more general condition on
the initial datum $v^0$. The right framework is that of Zygmund spaces.
These classes of functions were introduced in \cite{Z1}, and they
belong to the more general framework of Besov spaces.

\subsubsection*{Zygmund estimates and log-Lipschitz regularity in the elliptic case}
We first  introduce the basic Zygmund class
$\Lambda_1({\mathbb{R}^d})$ as the set of continuous bounded
functions $f$ over $\mathbb{R}^d$ such that
\[
\sup_{x,y\in\mathbb{R}^d} \frac{|f(x)-2f((x+y)/2)+f(y)|}{|x-y|}<+\infty.
\]
It is well known that functions in the Zygmund class are in
general not Lipschitz, possibly nowhere differentiable.  Indeed,
functions in $\Lambda_1(\mathbb{R}^d)$  enjoy a log-Lipschitz
modulus of continuity. Therefore, for any $f\in
\Lambda_1(\mathbb{R}^d)$ there exists a positive constant $C$ such
that
\[
|f(x)-f(y)|\le C |x-y||\log|x-y||\quad\forall x,y\in\mathbb{R}^d,
\]
we refer for instance to \cite[Chapter 2, §3]{Z2}. More generally,
following for instance \cite[Chapter 5]{S} we may define the class
$\Lambda_n(\mathbb{R}^d)$ for any $n\in\mathbb{N}$ as follows. We
let $\Lambda_0=L^∞(\mathbb{R}^d)$ and we say that $f\in
\Lambda_0(\mathbb{R}^d)$ belongs to $\Lambda_n(\mathbb{R}^d)$,
$n\ge 2$, if $\nabla f\in \Lambda_{n-1}(\mathbb{R}^d)$ or,
equivalently, if $f\in W^{n-1,∞}(\mathbb{R}^d)$ and all the
derivatives of $f$ of order $n-1$ belong to
$\Lambda_1(\mathbb{R}^d)$. In the usual notation of Besov spaces,
$\Lambda_n$ corresponds to $B^n_{∞,\,∞}$.
In this framework we have
\begin{proofad1}
If $\alpha>0$, from the general theory on Bessel potentials (see
for instance \cite[Chapter 5, §3-6]{S}) we learn that by
convolution with the Bessel kernel $\mathcal{B}_{\alpha,d}$ we
indeed get two indices of regularity in $\Lambda_n$ spaces.
Therefore, if $\rho\in L^\infty(\mathbb{R}^d)$, we indeed get that
$v=\mathcal{B}_{\alpha,d}\ast \rho$ belongs to
$\Lambda_2(\mathbb{R}^d)$, and thus $\nabla v\in
\Lambda_1(\mathbb{R}^d)$ and \eqref{loglipschitz} follows. For the
case $\alpha=0$ we address to the references mentioned in Section
\ref{prel} (it is also possible to directly check that $\nabla
v\in L^\infty(\mathbb{R}^d)$, and then the Newtonian potential
behaves like the Bessel potential near the origin so that $\nabla
v$ is also log-Lipschitz).
\end{proofad1}

\subsubsection*{Zygmund estimates and log-Lipschitz regularity in the parabolic case}

Let $T>0$. In this section,  let us denote  $Q_T:=(0,T)\times \mathbb{R}^d$ and then
$\bar Q_T:=[0,T]\times\mathbb{R}^d$. In the half $d+1$ dimensional
space, we consider the standard parabolic metric
\[
\delta((x,t),(y,s)):=\max\{|x-y|,\sqrt{|t-s|}\}.
\]
With respect to the parabolic metric, the definition of Zygmund
spaces adapts as follows. We have $\Lambda_0(\bar Q_T):=L^∞(Q_T)$,
and  $\Lambda_1(\bar Q_T)$ is the space of continuous bounded
functions $f$ over $\bar Q_T$ such that there hold
\begin{equation}\label{ell1}
\sup_{{x,y\in\mathbb{R}^d}\atop{ t \in [0,T] }} \!\!\frac{|f(x,t)-2f((x+y)/2,t)+f(y,t)|}{|x-y|}\; +\!\!\!\sup_{{x\in\mathbb{R}^d}\atop{ 0\le s< t\le T} }\!\!\!
\frac{|f(x,t)-2f(x,(t+s)/2)+f(x,s)|}{|t-s|^{1/2}}<+∞.
\end{equation}
Moreover, we say that $f\in L^\infty(Q_T)$ belongs to
$\Lambda_2(\bar Q_T)$ if
\begin{equation*}
\sup_{x\in\mathbb{R}^d\,,\, 0\le s< t\le T } \frac{|f(x,t)-2f(x,(t+s)/2)+f(x,s)|}{|t-s|}<+∞
\end{equation*}
and $\nabla f\in\Lambda_1(\mathbb{R}^d)$. In particular,  we see
that $f\in\Lambda_2(\bar Q_t)$ implies $f\in
L^∞((0,T);W^{1,∞}(\mathbb{R}^d))$, with $\nabla f$ satisfying
\eqref{ell1}, so that finally $f$ satisfies also
\eqref{parabolicloglipschitz}.

When dealing with parabolic equations, it is suitable to consider
spaces of functions defined with respect to the parabolic metric,
since it is natural to deal with functions which have derivative
up to order $k$ with respect to time and $2k$ with respect to
space. For classic results, we refer for instance to \cite{C} or
to the monograph \cite{LSU}, where estimates are derived in
Sobolev and H\"older spaces of this kind, see Chapter 4 therein.

In \cite{C} we find that if the forcing term of the heat equation
has bounded mean oscillation (BMO), still with respect to the
parabolic metric, than the same holds true for second order space
derivatives and first order time derivatives of the solution. This
would be enough for deducing that first derivatives in space are
in the Zygmund class with respect to the parabolic metric and that
therefore they satisfy a log-Lipschitz estimate. The results in
\cite{C} deal only with null initial datum, but they can be
generalized to more general data with suitable regularity
requirements. Some extensions involving initial data in Zygmund
classes are found in \cite{A,K}, based on direct estimates on
fundamental solutions. Summing up, we have
\begin{proofad2}
Suppose that $v$ is the solution (convolution with fundamental
operator) of the forced heat equation $\partial_t v=\Delta
v+\rho$. Suppose $\rho\in \Lambda_0(\bar Q_T)$ and $v^0\in
\Lambda_{2}(\mathbb{R}^d)$. Then we have $v\in\Lambda_{2}(\bar
Q_T)$. See \cite{C} for the case $v^0=0$, see \cite[Theorem 4]{K}
for a general result. If we consider the second equation of
\eqref{ks} with $\alpha>0$,  the fundamental solution is just
multiplied by a decaying exponential at infinity and the same
result carries over. Therefore, a sufficient condition in order to
have $v$ satisfying \eqref{parabolicloglipschitz} is $v^0\in
W^{2,\infty}(\mathbb{R}^d)$, because
$W^{2,\infty}(\mathbb{R}^d)\subset \Lambda_2(\mathbb{R}^d)$.
\end{proofad2}

This gives a rigorous justification of the assumptions on the
initial datum of Theorem \ref{main2}. However a refined analysis
shows that this assumption can be weakened, as we do next.


\subsubsection*{Initial datum in $\Lambda_1(\mathbb{R}^d)$}
We have to consider the weighted
Zygmund space $\Lambda_2^{-1}(Q_T)$, defined as the
corresponding space $\Lambda_2(\bar Q_T)$, with the addition of a
time weight which is divergent as $t\to 0$. In particular, locally
in $Q_T$ functions in  $\Lambda_2^{-1}(Q_T)$ have the same
smoothness as the ones in $\Lambda_2(\bar Q_T)$, but this
regularity does no more extend to the closure of $Q_T$. More
precisely, by definition $f\in\Lambda_2^{-1}(Q_T)$ means that
$f\in\Lambda_1(\bar Q_T)$,
\begin{equation}\label{atzero}
\sup_{{x,y\in\mathbb{R}^d}\atop{ t \in [0,T] }} \sqrt{t}\; \frac{|\nabla f(x,t)-2\nabla f((x+y)/2,t)+\nabla f(y,t)|}{|x-y|}<+\infty
\end{equation}
and the second finite differences of $f$ and $\nabla f$ with
respect to time verify the corresponding estimates, as in the
definition of $\Lambda_2(\bar Q _T)$, still with the addition of
the weight $t^{1/2}$.

\begin{theorem}\label{refined}
Let $T>0$. Let $\rho^0\in\MeasuresTwo \cap  L^\infty(\mathbb{R}^d)$ and
$v^0\in \Lambda_1(\mathbb{R}^d)\cap W^{1,2}(\mathbb{R}^d)$.
Let $z_t=(\rho_t,v_t)$ be
a bounded solution on $[0,T]\times\mathbb{R}^d$ to the Cauchy
problem for \eqref{ks}, according to {\rm Definition \ref{def2}}, with
initial datum $z^0=(\rho^0,v^0)$.
%
For any reference point $\bar z=(\bar\rho,\bar v)\in(\MeasuresTwo \cap L^\infty(\mathbb{R}^d))\times  W^{1,2}(\mathbb{R}^d)$,
the general EVI holds
\begin{equation}\label{EVIZ}
    \frac{1}2\frac{d}{dt} D^2(z_t,\bar z) \le\,
\FF_{\eps,\alpha}(\bar z) - \FF_{\eps,\alpha}(z_t)+ C  t^{-1/2}
\omega(D^2(z_t,\bar z))  \quad \text{for $\mathcal{L}^1$-a.e. }
t\in (0,T),
\end{equation}
for a constant $C$ depending  on $\|\rho\|_{L^\infty((0,T)\times{\mathbb{R}^d})}$,
$\|v^0\|_{\Lambda^{1}(\mathbb{R}^d)}$, $\|\bar\rho\|_{L^\infty(\mathbb{R}^d)}$.

Moreover the EDE \eqref{EDE} holds, and the expansion control property holds in this form:
given another bounded solution $t\mapsto\zeta_t$ as above with initial
datum $\zeta^0 \in(\MeasuresTwo \cap L^\infty(\mathbb{R}^d))\times (\Lambda_1(\mathbb{R}^d)\cap W^{1,2}(\mathbb{R}^d))$ there is
\begin{equation}\label{contractivityZ}
D^2(z_t,\zeta_t)\le G^{-1}(G(D^2(z^0,\zeta^0))+8C\sqrt t) \quad \text{for every } t\in [0,T),
\end{equation}
where $C$ is a constant depending on   $\|\rho\|_{L^{\infty}((0,T)\times\rd)}$
 and
$\|v^0\|_{\Lambda_1(\mathbb{R}^d)}$ (and the same quantities associated to $\zeta$). In particular, $z=\zeta$ if $z^0=\zeta^0$.
\end{theorem}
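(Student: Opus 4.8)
The plan is to re-run the four-step argument from the proof of Theorem~\ref{th:GFEVI} in the parabolic-parabolic case $\eps>0$, the only structural change being that the spatial log-Lipschitz constant of $\nabla v_t$ is now permitted to blow up like $t^{-1/2}$ as $t\to 0$. First I would record the parabolic smoothing estimate that replaces Proposition~\ref{pro2}: since $v^0\in\Lambda_1(\mathbb{R}^d)$ rather than $\Lambda_2(\mathbb{R}^d)$, convolution with the fundamental solution of $\partial_t v=\Delta v-\alpha v+\rho$ recovers the missing half-derivative only for $t>0$, so that $v\in\Lambda_2^{-1}(Q_T)$ in the sense of \eqref{atzero}. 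Concretely, the spatial Zygmund seminorm of $\nabla v_t$ is controlled by $C\,t^{-1/2}$, whence the weighted estimate
\[
|\nabla v_t(x)-\nabla v_t(y)|^2\le C^2\,t^{-1}\,\varphi(|x-y|^2)\qquad\forall x,y\in\mathbb{R}^d,
\]
with $\varphi$ as in \eqref{varphi} and $C$ depending on $\|\rho\|_{L^\infty((0,T)\times\mathbb{R}^d)}$ and $\|v^0\|_{\Lambda_1(\mathbb{R}^d)}$. This is the main analytic input, and the step I expect to be hardest: one must track, uniformly in space, how the half-power of $t$ trades against the derivative that is no longer present in the initial datum, relying on the fundamental-solution bounds for Zygmund initial data of \cite{A,K}.

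Granting this estimate, Step~1 of Theorem~\ref{th:GFEVI} goes through verbatim, except that in the chain \eqref{perdopo} the constant in front of $W_2(\rho_t,\bar\rho)$ is upgraded from $C$ to $C\,t^{-1/2}$, because the Jensen step now uses the weighted log-Lipschitz bound above in place of \eqref{parabolicloglipschitz}. Hence the analogue of \eqref{I_t} acquires the factor $t^{-1/2}$, and reproducing Step~2 — the identity \eqref{lunga}, the passage to \eqref{kkk}, and the $\dot H^{-1}$--$\dot H^1$ duality estimate \eqref{e} via \eqref{h-1} — yields exactly the EVI \eqref{EVIZ}, the term $-\tfrac{\alpha}{2}\|v_t-\bar v\|^2_{L^2(\mathbb{R}^d)}$ being dropped as nonnegative. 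The constant dependencies are as in Theorem~\ref{th:GFEVI} with $\|v^0\|_{W^{2,\infty}(\mathbb{R}^d)}$ replaced by $\|v^0\|_{\Lambda_1(\mathbb{R}^d)}$.

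The EDE \eqref{EDE} then follows from \eqref{EVIZ} by the same abstract argument as in \cite[Proposition~3.6]{AG}, the integrable weight $t^{-1/2}$ causing no difficulty. For the expansion control I would again invoke \cite[Lemma~4.3.4]{AGS}: applying \eqref{EVIZ} to $z_t$ with reference $\zeta_s$ and symmetrically to $\zeta_t$ with reference $z_s$, and using that both solutions are uniformly bounded so that the constants are finite, one obtains for a.e.\ $t\in(0,T)$
\[
\frac{d}{dt}D^2(z_t,\zeta_t)\le 4C\,t^{-1/2}\,\omega(D^2(z_t,\zeta_t)),
\]
with $C=\max\{C_1,C_2\}$ exactly as in Step~4. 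Setting $y(t)=D^2(z_t,\zeta_t)$ and recalling $G(s)=\int_{s_0}^s\omega(r)^{-1}\,dr$, division by $\omega(y)$ and integration give $G(y(t))\le G(y(0))+4C\int_0^t s^{-1/2}\,ds=G(y(0))+8C\sqrt{t}$, which is \eqref{contractivityZ}; the crucial point is precisely that the singularity $t^{-1/2}$ is integrable, so the Bihari-type bound survives. Finally, taking $z^0=\zeta^0$ forces $G(D^2(z^0,\zeta^0))=G(0)=-\infty$, so the right-hand side of \eqref{contractivityZ} equals $G^{-1}(-\infty)=0$, giving $z=\zeta$ and hence uniqueness.
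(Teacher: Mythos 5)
Your architecture is exactly that of the paper's proof: establish a weighted log-Lipschitz estimate for $\nabla v_t$ with constant $C\,t^{-1/2}$ via the Zygmund-space theory of \cite{A,K}, feed it into Step 1 of the proof of Theorem \ref{th:GFEVI} so that \eqref{perdopo} and \eqref{I_t} pick up the factor $t^{-1/2}$, rerun Steps 2--4 unchanged, and close with the Bihari-type integration $\int_0^t s^{-1/2}\,ds=2\sqrt t$, giving the $8C\sqrt t$ in \eqref{contractivityZ} and uniqueness from $G(0)=-\infty$. The EVI, EDE, expansion-control and uniqueness portions of your argument coincide with the paper's.

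There is, however, one genuine jump in your derivation of the key estimate. You assert that the weighted spatial Zygmund seminorm bound on $\nabla v_t$ (the paper's \eqref{atzero}, with the weight $\sqrt t$) yields, ``whence'', the global estimate $|\nabla v_t(x)-\nabla v_t(y)|^2\le C^2t^{-1}\varphi(|x-y|^2)$ for \emph{all} $x,y\in\mathbb{R}^d$. It does not: the seminorm only controls the local log-Lipschitz modulus, i.e. $|\nabla v_t(x)-\nabla v_t(y)|\le K t^{-1/2}|x-y|\,|\log|x-y||$ (the paper's \eqref{loggy}), and this bound is useless when $|x-y|$ is of order one or larger --- precisely the regime where $\varphi$ is linear and where \eqref{loggy} degenerates (at $|x-y|=1$ it gives zero). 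To control that regime one needs a sup bound on $\nabla v_t$, and, as the paper explicitly warns, membership of $v$ in $\Lambda_2^{-1}(Q_T)$ does \emph{not} imply $\nabla v\in L^\infty(Q_T)$. The paper fills this with a second, separate conclusion of \cite[Theorem 4]{K}, namely \eqref{atinfty}: $\|\nabla v_t\|_{L^\infty(\mathbb{R}^d)}\le\bar K(1+|\log t|)$, which (since $1+|\log t|\lesssim t^{-1/2}$ on $(0,T)$) upgrades \eqref{loggy} to $|\nabla v_t(x)-\nabla v_t(y)|\le Kt^{-1/2}|x-y|(1+\log^-|x-y|)$ and only then to \eqref{finalloglipschitz}. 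Since you flagged this step as the main analytic input and deferred it to \cite{A,K}, the omission is localized; but as written your implication is not valid, and the proof needs exactly this additional logarithmically growing gradient sup bound to hold together.
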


\begin{proof}
Since we are in the hypotheses of \cite[Theorem 4]{K}, $v$
belongs to $\Lambda_2^{-1}(Q_T)$, so that \eqref{atzero} above
holds for $v$ and then,  due to the log-Lipschitz regularity in
the Zygmund class, we deduce
\begin{equation}\label{loggy}
|\nabla v_t(x)-\nabla v_t(y)|\le K t^{-1/2}|x-y||\log|x-y||,
\end{equation}
for all $x\in\mathbb{R}^d$, $t\in (0,T)$, where $K$ is a suitable
constant depending only on $T$ and the data. Notice that from the
definition of $\Lambda_2^{-1}(Q_T)$, it does not follow
that $\nabla v\in L^∞(Q_T)$. However, in \cite[Theorem 4]{K} it
is also shown that, still for $v^0\in \Lambda_1(\mathbb{R}^d)$ and
$\rho\in L^∞(Q_T)$, the solution $v_t$ of the
parabolic equation also satisfies
\begin{equation}\label{atinfty}
\|\nabla v_t(\cdot)\|_{L^∞(\mathbb{R}^d)}\le \bar K(1+|\log t|),\quad t\in(0,T),
\end{equation}
again for some positive $\bar K$ depending on  $\|\rho\|_{L^\infty(Q_T)}$ and $\|v^0\|_{\Lambda_1(\mathbb{R}^d)}$.
Taking  \eqref{atinfty} into account, it is clear that \eqref{loggy}
can be improved (we do not relabel the constant) into
\[
|\nabla v_t(x)-\nabla v_t(y)|\le K t^{-1/2}|x-y|(1+\log^-|x-y|).
\]
Thus we deduce the weighted analogous of \eqref{parabolicloglipschitz}, that is
\begin{equation}\label{finalloglipschitz}
|\nabla v_t(x)-\nabla v_t(x)|^2\le \frac{C^2}{t}\:\varphi(|x-y|^2),
\end{equation}
where $C$ is a new suitable positive constant depending on the
data and $\varphi$ is defined in \eqref{varphi}. Following the
line of the proof Theorem \ref{th:GFEVI} we reach
the estimate \eqref{perdopo} for $II_t$, which now has to be
changed because we have to use \eqref{finalloglipschitz},
obtaining
\[
|II_t|\le Ct^{-1/2} W_2(\rho_t,\bar\rho)
\sqrt{\mass\varphi(\mass^{-1} W_2^2(\rho_t,\bar\rho))}=Ct^{-1/2}\omega(W_2^2(\rho_t,\bar\rho)).
\]
We can repeat all the other steps which lead to \eqref{e},
obtaining the corresponding EVI with the additional weight
$t^{-1/2}$, which directly lead to \eqref{EVIZ}. We conclude as in
Step 4 of the proof of Theorem \ref{th:GFEVI}: from \eqref{EVIZ},
the EDE formulation \eqref{EDE} follows, still referring to
\cite[Proposition 3.6]{AG}. Moreover, \eqref{contractivityZ}
follows by \eqref{EVIZ} by
\begin{equation*}
    \frac{d}{dt} D^2(z_t,\zeta_t) \le\,
 4 C t^{-1/2} \omega(D^2(z_t,\zeta_t)), \quad \text{for $\mathcal{L}^1$-a.e. } t\in (0,T).
\end{equation*}
Indeed the inequality
\begin{equation*}
     y(t) \le y(0)+ 4 C \int_0^t s^{-1/2}\omega(y(s))\,ds
\end{equation*}
implies that $y(t) \leq G^{-1}(G(y(0))+8C\sqrt t)$ as desired. Finally, the uniqueness result follows since $G(0)=-\infty$ and $G^{-1}(-\infty)=0$.
\end{proof}


\section{The case of nonlinear diffusion}\label{sec:nonlinear}

We show next how to adapt our techniques to more general
aggregation diffusion equations in a quite straightforward way.
Let us consider the problem
\begin{equation}\label{nonlinearpp}
\left\{\begin{array}{rl}
\partial_t \rho&=\div (\rho\nabla P(\rho))-\, \div(\rho\nabla
v),\\[2mm]
\eps\partial_t v&=\Delta v + \rho- \alpha v,
\end{array}\right.
\end{equation}
to which we associate the functional
\begin{equation}\label{generalizedfunctional}
\GG_{\eps,\alpha}(\rho,v):=\int_{\mathbb{R}^d}(\Psi(\rho)-v\rho)\,dx+\frac12\int_{\mathbb{R}^d}(|\nabla
v|^2+\alpha v^2)\,dx,
\end{equation}
for all $\eps>0$, $\alpha\geq 0$, $\rho\in\MeasuresTwo\cap L^\infty({\mathbb{R}^d})$, $v\in W^{1,2}(\mathbb{R}^d)$, where ${\Psi(\rho):=\int_0^\rho
P(r)\,dr}$. \BBB We give the same restrictions as \cite[§9.3]{AGS}, the
first one being
$$
\lim_{r\to 0}\frac{\Psi(r)}{r^q}>-∞\quad\mbox{for some }
q>\frac{d}{d+2},
$$
a property ensuring that ${\int_{\mathbb{R}^d}\Psi(\rho)\neq -∞}$.
Moreover, the crucial property to be satisfied by the new
nonlinearity is the displacement convexity, that is the map
$r\mapsto r^{d}\Psi(r^{-d})$ is convex and nondecreasing on $(0,+∞)$. This
notion, introduced in \cite{Mc}, is stronger than convexity and
corresponds for $C^2$ functions to the inequality
\[
r^{-1}\Psi(r)-\Psi^\prime(r)+r\Psi^{\prime\prime}(r)\ge-\,\frac{1}{d-1}\,r
\Psi^{\prime\prime}(r) \qquad \forall\;r\in(0,+\infty).
\]
The more relevant cases correspond to nonlinear diffusion of
power kind. Indeed, if
$$
\Psi(\rho)=\frac{1}{m-1}\,\rho^m,\quad m\ge \frac{d-1}{d}
$$
the displacement convexity property holds. The case $m>1$ (resp.
$m<1$) correspond to a slow diffusion (resp. fast diffusion) in
the equation. On the other hand, the linear diffusion is recovered
taking $P(\rho)=\log\rho$, it is seen that in this case functional
\eqref{generalizedfunctional} is reduced, up to a constant, to
\eqref{fullfunctional}. Finally, let us mention that the
free-energy functional in the parabolic-elliptic case is similar
to \eqref{functional2} and given by
\begin{equation}\label{generalizedfunctional2}
\GG_{0,\alpha}(\rho,v):=\int_{\mathbb{R}^d}(\Psi(\rho)-\frac12
v\rho)\, dx,
\end{equation}
for $\rho\in\MeasuresTwo\cap L^\infty(\mathbb{R}^d)$ and $v=\mathcal{B}_{\alpha,d}\ast \rho$. It can be written as \eqref{generalizedfunctional}, taking into account the same renormalization as in \eqref{renorm}, to be done in the
pathological cases $\eps=\alpha=0$ and $d=1,2$.

The notion of bounded solution is completely analogous to
Definitions \ref{def1} and \ref{def2}, both for the
parabolic-elliptic and the parabolic-parabolic case. Indeed, the
only point to adapt is the finiteness  of the Fisher
information, now rewritten into the generalized version
\begin{equation}\label{generalizedfisher}
\int_0^T\int_{\mathbb{R}^d}|\nabla
P(\rho_t(x))|^2\,\rho_t(x)\,dx\,dt<+∞.
\end{equation}

\begin{corollary}
{\rm Theorem \ref{main1}}, {\rm Theorem \ref{main2}}, {\rm Theorem \ref{th:GFEVI}} and {\rm
Theorem \ref{refined}} hold for bounded solutions to
\eqref{nonlinearpp}.
\end{corollary}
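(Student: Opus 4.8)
The strategy is to reproduce verbatim the proofs of Theorems \ref{th:GFEVI} and \ref{refined}, observing that the only structural change introduced by \eqref{nonlinearpp} concerns the diffusive (internal-energy) part of the free energy, whereas the chemoattractant equation — and hence the quasi-Lipschitz estimates of Propositions \ref{pro1} and \ref{pro2} — is left completely unchanged. Accordingly, I would set $\mathcal{E}(\rho):=\int_{\mathbb{R}^d}\Psi(\rho)\,dx$ and replace $\int\rho\log\rho$ by $\mathcal{E}(\rho)$ throughout the definitions of $\Phi_{\eps,\alpha}$ and $\GG_{\eps,\alpha}$; the finiteness of $\mathcal{E}$ is guaranteed by the growth assumption $\lim_{r\to0}\Psi(r)/r^q>-\infty$ with $q>d/(d+2)$, exactly as in \cite[Sec.~9.3]{AGS}, and the renormalization for the cases $\eps=\alpha=0$, $d=1,2$ is carried over unchanged.

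The one genuinely new input is the replacement of the above-the-tangent inequality \eqref{wassersteindiff}. Under McCann's displacement-convexity condition — namely that $r\mapsto r^d\Psi(r^{-d})$ be convex and nondecreasing — the internal energy $\mathcal{E}$ is displacement convex and its above-the-tangent characterization (see \cite[Ch.~10]{AGS}) reads
\begin{equation*}
\mathcal{E}(\bar\rho)-\mathcal{E}(\rho_t)\ge\int_{\mathbb{R}^d}\langle\nabla P(\rho_t(x)),\mathcal{T}_t(x)-x\rangle\,\rho_t(x)\,dx,
\end{equation*}
where $\mathcal{T}_t$ is the optimal map from $\rho_t$ to $\bar\rho$ and we used $\Psi'=P$. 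This reduces to \eqref{wassersteindiff} when $P(\rho)=\log\rho$, since then $\rho_t\nabla P(\rho_t)=\nabla\rho_t$. Correspondingly, the continuity equation for $\rho_t$ now carries the velocity field $\rho_t\xi_t=-\rho_t\nabla P(\rho_t)+\rho_t\nabla v_t$, and the finiteness of the generalized Fisher information \eqref{generalizedfisher} together with the uniform $L^\infty$ bound on $\rho_t$ yields $\int_0^T\|\xi_t\|^2_{L^2(\mathbb{R}^d,\rho_t;\mathbb{R}^d)}\,dt<+\infty$, so that the curve is absolutely continuous and the differentiation formula \eqref{timederivative} applies verbatim.

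With these two substitutions the computation of $I_t$ and the estimate of the error term $II_t$ go through word for word: the crucial bound \eqref{perdopo} uses only the log-Lipschitz property of $\nabla v_t$ (namely \eqref{loglipschitz}, \eqref{parabolicloglipschitz}, or \eqref{finalloglipschitz} for Theorem \ref{refined}), which is unaffected by the change of diffusion. Steps 2 and 3 treat exclusively the quadratic-in-$v$ part of the functional and are therefore identical, as is the final Gr\"onwall-type argument of Step 4 leading to the EVI \eqref{EVI}, the energy dissipation equality, and the expansion control \eqref{contractivity}; uniqueness (Theorems \ref{main1} and \ref{main2}) then follows as before from $G^{-1}(-\infty)=0$. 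The argument for Theorem \ref{refined} is the same with the additional weight $t^{-1/2}$.

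The main point requiring care — and the only real obstacle — is to make the above-the-tangent inequality rigorous in the present regularity class: one must check that under \eqref{generalizedfisher} the subdifferential term $\int_{\mathbb{R}^d}\langle\rho_t\nabla P(\rho_t),\mathcal{T}_t-x\rangle\,dx$ is finite and that the differentiability of $\mathcal{E}$ along the displacement interpolation holds, which is precisely the content of \cite[Ch.~10]{AGS} under McCann's condition, while the growth restriction as $r\to0$ is what prevents $\mathcal{E}$ from taking the value $-\infty$. Everything else is a transcription of the linear case.
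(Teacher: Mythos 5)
Your proposal is correct and follows essentially the same route as the paper: the single new ingredient is the above-the-tangent (Wasserstein subdifferential) inequality for $\int_{\mathbb{R}^d}\Psi(\rho)\,dx$ under McCann's displacement convexity condition, which is exactly the paper's inequality \eqref{internalenergysubdiff} (justified via \cite[\S 9.3, \S 10.4.3]{AGS}), substituting \eqref{wassersteindiff} and hence \eqref{I_t}, after which Steps 2--4 of the proof of Theorem \ref{th:GFEVI}, the weighted argument of Theorem \ref{refined}, and the uniqueness conclusions carry over unchanged.
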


\begin{proof}
The displacement convexity property makes the internal energy
functional $\rho\in\MeasuresTwo \mapsto
\int_{\mathbb{R}^d}\Psi(\rho(x))\,dx$ convex along Wasserstein
geodesics, as shown in \cite[§9.3]{AGS}. This in turn gives the
possibility to write down a subdifferential inequality in
Wasserstein sense (for a definition see \cite[§10.1.1]{AGS}) as
follows. Let $\rho\in\MeasuresTwo\cap L^{\infty}(\mathbb{R}^d)$ be
such that $\int_{\mathbb{R}^d}|\nabla P(\rho)|^2\,\rho\,dx$ is
finite. Then
\begin{equation}\label{internalenergysubdiff}
\int_{\mathbb{R}^d}
\Psi(\bar\rho(x))\,dx-\int_{\mathbb{R}^d}\Psi(\rho(x))\,dx\ge
\int_{\mathbb{R^d}}\langle \nabla
P(\rho(x)),\mathcal{T}(x)-x\rangle\,\rho(x)\,dx,
\end{equation}
for any $\bar\rho\in\MeasuresTwo$, where $\mathcal T$ is the
optimal transport map from $\rho$ to $\bar\rho$. Convexity and
differentiability of functionals defined on probability densities,
as the internal energy, are standard elements in the theory of
Wasserstein gradient flows. For the proof of  inequality
\eqref{internalenergysubdiff}, which characterizes the vector
$\nabla P(\rho)$ as the Wasserstein subdifferential of the
internal energy functional, we refer to \cite[§3.3.1]{AG} or to the general theory in
\cite[§10.4.3]{AGS}.

On the other hand, \eqref{internalenergysubdiff} can be used to
generalize the proof of Theorem \ref{th:GFEVI}.
Indeed, if $(\rho_t,v_t) $ solves \eqref{nonlinearpp}
according to our notion of solution, thanks to
\eqref{generalizedfisher} $\rho_t$ satisfies the identity
\eqref{internalenergysubdiff} for almost any $t$. From this
inequality, all the rest of the proof of Theorem \ref{th:GFEVI} can be carried out. Indeed, with the same
notation therein, we obtain the $\mathcal{L}^1$-a.e. $t\in (0,T)$
inequality
\begin{align*}
I_t:=\GG_{\eps,\alpha}(\bar\rho,\bar v)&-\GG_{\eps,\alpha}(\rho_t,
v_t)
+\int_{\mathbb{R}^d}(\bar v(x)- v_t(x))\bar\rho(x)\,dx\ge\\
&\int_{\mathbb{R}^d} \langle \rho_t(x)\nabla
P(\rho_t(x))-\rho_t(x)\nabla v(x),\mathcal{T}_t(x)-x \rangle \, dx
- C
\omega(W_2^2(\rho_t,\bar\rho)),
\end{align*}
for any $\bar\rho\in\MeasuresTwo\cap L^∞(\mathbb{R}^d)$ and any
$\bar v\in W^{1,2}(\mathbb{R}^d)$ if $\eps>0$ or
$\bar v=\mathcal{B}_{\alpha,d}\ast \bar\rho$ if $\eps=0$. This estimate
substitutes \eqref{I_t} in the proof of Theorem \ref{th:GFEVI}. The rest of the proofs is
completely analogous.
\end{proof}

\subsection*{Acknowledgements}

The authors would like to thank Paolo Acquistapace and Lucilla
Corrias for several discussions about this work. JAC acknowledges
support from the project MTM2011-27739-C04-02 DGI (Spain) and
2009-SGR-345 from AGAUR-Generalitat de Catalunya. JAC acknowledges
support from the Royal Society through a Wolfson Research Merit
Award. This work was partially supported by Engineering and
Physical Sciences Research Council grant number EP/K008404/1. SL
and EM has been partially supported by the INDAM-GNAMPA project
2011 "Measure solution of differential equations of
drift-diffusion, interactions and of Cahn-Hilliard type". EM has
been partially supported by a postdoctoral scholarship of the
Fondation Ma\-th\'e\-ma\-ti\-que Jacques Hadamard, he acknowledges
hospitality from Paris-Sud University.


\end{document}